\newtheorem{theorem}{Theorem}[section]
\newtheorem{lemma}[theorem]{Lemma}
\newtheorem{corollary}[theorem]{Corollary}
\newtheorem{remark}[theorem]{Remark}
\numberwithin{equation}{section}
\numberwithin{equation}{section}
\renewcommand{\phi}{\varphi}
\begin{document}

\title{Substituting Independent Processes}
\author{Manfred Denker}

%\date{07-05-1982}

\maketitle

\begin{abstract}
It is shown by constructing Rohlin's canonical measures that for a strictly stationary, $d$-dimensional vector-valued process $\mathcal X=(X_n)_{n\in\mathbb N}$  there exists another strictly stationary $d$-dimensional process $\mathcal U=(U_n)_{n\in \mathbb N}$ with uniform one-dimensional marginals and with the same mixing properties as $\mathcal X$, such that $\mathcal X$ is a finitary factor of $\mathcal U$ of coding length $1$, and such that the projection map is order preserving in each coordinate. As an application this extends the a.s. approximation of the empirical distribution function of weakly dependent random vectors with continuous distribution function in \cite{147.1} and \cite{147.3} to the general case.
\end{abstract}

\section{Introduction}\label{sec:147.1}

Let $X_1$, $X_2$, $X_3 \ ...$ be $d$-dimensional random vectors $X_n=(X_n^{(1)},...,X_n^{(d)})$, which form a strictly stationary process. Denote by $F$ the distribution function of $X_1$and by $F_\nu$ that of  $X_1^{(\nu)}$  ($\nu=1,...,d$). For several problems in probability and statistics it is necessary (or at least simplifying the situation) to represent the process  $\mathcal X=(X_n)_{n\ge 1}$  as a factor of some other $d$-dimensional process $\mathcal U=(U_n)_{n\ge 1}$, $U_n=(U_n^{(1)},...,U_n^{(d)})$, with uniformly distributed $U_n^{(\nu)}$, $\nu=1,...,d$  (cf. \cite{147.3}, \cite{147.4}, \cite{147.5}). Moreover, the process $\mathcal U$ should have the same dependence properties  as $\mathcal X$, such as ergodicity, weak, strong and uniform  mixing. It is crucial in these applications to have a factor map $\pi:([0,1]^d)^{\mathbb N}\to (\mathbb R^d)^{\mathbb N}$ which factorizes coordinate-wise and each coordinate is mapped by a (lexicographical) order-preserving correspondence. Such a factor map is given by $\pi=(\pi_i)_{i\in\mathbb N} $, $\pi_i=(F_1^{-1},...,F_d^{-1})$.  

If the random variables $X_n$ are independent and if $d=1$, then $\pi=(F^{-1},F^{-1},$
\newline$....)$ and independent, uniformly distributed $U_n$ ($n\in\mathbb N$) factorize via $\pi$ to $\mathcal X$, for $F^{-1}U_n$ has the same distributions as $X_n$. However, if $d>1$, the situation is more complicated (cf.\cite{147.4}) and if, moreover, $\mathcal X$ is no longer an independent process, some additional problems arise. As long as each $F_\nu$ has no jumps, $\pi_i$ is invertible  ($i\in \mathbb N$) and so is $\pi$, hence $\pi$ transports the measure $\nu$ on $(\mathbb R^d)^{\mathbb N}$, given by $\mathcal X$, to some measure $\mu$ on $([0,1]^d)^{\mathbb N}$ possessing all properties of interest. When $F_\nu$ has point masses, this  construction is no longer possible and in this note we present  an adequate construction. Note that it is fairly easy using the theory of Lebesgue spaces to obtain some factor map from some process with uniform distributed marginals and the same mixing properties, but such a construction leaves the factor map unspecified.

The present construction  of the $\mathcal U$-process bears the following idea: Suppose for simplicity that $d=1$, that the process $\mathcal X$ is ergodic and that its distribution function $F$ has only one jump at $a\in \mathbb R$. We may think of  $\mathcal X$ to be represented  as the canonical model on $\mathbb R^{\mathbb N}$ by a shift-invariant probability measure $\nu$, $T:\mathbb R^{\mathbb N}\to \mathbb R^{\mathbb N}$ denoting the shift transformation. Pick a generic point $x=x(n)$ for $\nu$, which also has the correct frequency of occurrences of $a$ (cf. \cite{147.2}). Now let $y\in \pi^{-1}(x)$, $\pi=(F^{-1},F^{-1},...)$. Then for all $f\in \pi C_b(\mathbb R^{\mathbb N})\subset C_b([0,1]^{\mathbb N})$, $\lim \frac 1n\sum_{0\le k<n} f(T^k(y))$ exists, i.e., every weak limit of $\frac 1n\sum_{0\le k<n}\epsilon_{T^k(y)}$ for $y\in \pi^{-1}(x)$  projects to $\nu$. In order to choose $y$ properly if $x(n)=a$, define $y(n)$ according to a generic point for independent, uniformly on $\{s: F^{-1}(s)=a\}$ distributed random variables. Such a $y$ is generic and defines a probability with nice properties.

 As an application of this construction one obtains an a.s. invariance principle for the empirical distribution function  of strongly mixing  processes $\mathcal X$. This extends the  corresponding result of Berkes, Philipp \cite{147.1} and Philipp, Pinzur \cite{147.3}, because in these theorems the continuity of $F$ is assumed (see the next section). It is still an open problem to prove the almost sure invariance principles for functions of mixing random variables in general (cf. \cite{147.1}). The present method does not seem to cover such an extension.
 
    \section{The main result and an application} \label{sec:147.2}  

Instead of considering the abstract process $\mathcal X=(X_n)_{n\ge 1}$  we consider its natural extension on $(\mathbb R^d)^\mathbb Z$ given by the shift $T$-invariant measure $\nu$ on the Borel field $\mathcal B$ on $(\mathbb R^d)^\mathbb Z$. Note that $(\pi_t)_{t\ge 1}$ has the same distribution as $\mathcal X$, where $\pi_t$ ($t\in \mathbb Z$) is given by the projection
$$ \pi_t((x(n))_{n\in \mathbb Z})=x(t)\qquad \left(x=(x(n))_{n\in \mathbb Z}\in (\mathbb R^d)^\mathbb Z\right).$$

$\mathcal X= (X_n)$ is called ergodic, weakly mixing or mixing if the dynamical system $((\mathbb R^d)^\mathbb Z, \nu)$ has this property (cf.~\cite{147.2}) We also recall the following definitions of mixing properties of the process $(X_n)$: $\mathcal X$ is called
\begin{description}
\item[(a)] uniformly mixing with rate $\varphi(n)$ if
$$\varphi(n):=\sup\{|\nu(B|A)-\nu(B)|\ : A\in \sigma(p_t: t\le 0);\ B\in \sigma(p_t: t\ge n)\}\to 0,$$
\item[(b)] absolutely regular with rate $\beta(n)$ if
$$\beta(n):=\sup\{|\nu(B|\sigma(p_t:t\le 0))-\nu(B)|\ : \ B\in \sigma(p_t: t\ge n)\}\to 0,$$
\item[(c)] strongly mixing with rate $\alpha(n)$ if $\lim_{n\to\infty}\alpha(n)=0$ where
$$\alpha(n):=\sup\{|\nu(A\cap B)-\nu(A)\nu(B)|\ : A\in \sigma(p_t: t\le 0);\ B\in \sigma(p_t: t\ge n).$$
\end{description}
Here $\sigma(\mathcal F)$ denotes the $\sigma$-field generated by all Borel measurable functions in $\mathcal F$.  Note that the mixing rates for $\mathcal X$ in the usual definition do not change when passing to its natural extension.

If $F$ denotes a distribution function on $\mathbb R$, we denote by
$$ F^{-1}(s)=\inf\{ t:\ F(t)\ge s\}$$ 
its inverse. In the above situation, when $X_1$ has the distribution $F$ on $\mathbb R^d$ and $X_1^{(k)}$ has the distribution function $F^{(k)}$ ($k=1,...,d$), let $F^{-1}:=((F^{(1)})^{-1},...,$
\newline$(F^{(d)})^{-1})$, then $\pi:=(...,\pi_{-1},\pi_0,\pi_1,...)$ with $\pi_i=F^{-1}$ defines a measurable map $\pi: (]0,1[^d)^\mathbb Z\to (\mathbb R^d)^\mathbb Z$. We shall say that the process $\mathcal X$ is a factor  (via $\pi$) of the process $(Y_m)$, which can be represented on $(]0,1[^d)^\mathbb Z$ by the stationary measure $\mu$ if $\pi$ transports $\mu$ into $\nu$, i.e.~ $\pi\mu=\nu$.

We shall prove the following theorem in this note:
\begin{theorem}
Let $(X_n)_{n\in \mathbb Z}$ be a $d$-dimensional, strictly stationary process with distribution function $F$ for $X_1$ (resp. $F^{(k)}$ for $X_1^{(k)}$,$k=1,...,d$). Then there exists a $]0,1[^d$-valued, strictly stationary process $(U_m)_{m\in \mathbb Z}$ which factorizes via $\pi=(...,F^{-1},F^{-1},F^{-1},...)$ onto $(X_n)_{n\in \mathbb Z}$. The one-dimensional marginals of $U_m$ ($m\in \mathbb Z$) are uniformly distributed. Moreover, if the process $(X_n)_{n\in \mathbb Z}$ is ergodic, weakly mixing, mixing, strongly mixing with rate $\alpha(n)$, absolutely regular with rate $\beta(n)$, or uniformly mixing with rate $\varphi(n)$, then the process $(U_m)_{m\in \mathbb Z}$ can be constructed with the same properties, including the rates.
\end{theorem}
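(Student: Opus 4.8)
The plan is to realise $(U_m)$ as the \emph{randomised quantile} (distributional) transform of $(X_n)$, which is the concrete form of Rohlin's canonical system of measures for $\pi$. For each coordinate $k$ set $G_k(a,v)=F^{(k)}(a-)+v\,\bigl(F^{(k)}(a)-F^{(k)}(a-)\bigr)$ for $a\in\R$, $v\in\,]0,1[$, and $G(a,v)=(G_1(a^{(1)},v^{(1)}),\dots,G_d(a^{(d)},v^{(d)}))$. On $\Omega=(\R^d)^\Z\times(]0,1[^d)^\Z$ put $\mu'=\nu\otimes\lambda$, where $\lambda$ is the law of an i.i.d.\ sequence $(V_n)_{n\in\Z}$, uniform on $]0,1[^d$, independent of the $\nu$-process $(x_n)$; define $U_n:=G(x_n,V_n)$ and let $\mu$ be the law of $(U_n)_{n\in\Z}$. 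First I would record three elementary facts: (i) $\pi\mu=\nu$, since $(F^{(k)})^{-1}(G_k(a,v))=a$ for a.e.\ $(a,v)$; (ii) each $U_m^{(k)}$ is uniform on $]0,1[$, because $G_k$ is exactly the transform for which $G_k(X^{(k)},V^{(k)})$ is uniform when $X^{(k)}\sim F^{(k)}$ and $V^{(k)}$ is independent uniform; (iii) $\mu$ is shift-invariant, since $G$ acts coordinatewise in time and $\nu,\lambda$ are both shift-invariant, so the fibre measures are shift-equivariant. Equivalently $\mu=\int\mu_x\,d\nu(x)$ with $\mu_x$ the product over $n$ and $k$ of the uniform measures on the fibre intervals of $\pi$, which is the Rohlin disintegration announced in the abstract.

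The heart of the matter is the transfer of the dependence properties, and the decisive observation is structural: $(U_n)$ is a coordinatewise, time-respecting factor of the product system $\mathcal X\times\mathcal V$ (where $\mathcal V$ denotes the i.i.d.\ system with law $\lambda$), while conversely $(X_n)$ is a coordinatewise factor of $(U_n)$ via $X_n=F^{-1}(U_n)$. Writing $\mathcal P_x,\mathcal F_x^{(n)}$ and $\mathcal P_V,\mathcal F_V^{(n)}$ for the past/future $\sigma$-fields of the $x$- and $V$-coordinates, one therefore has $\sigma(U_t:t\le0)\subseteq\mathcal P_x\vee\mathcal P_V$, $\sigma(U_t:t\ge n)\subseteq\mathcal F_x^{(n)}\vee\mathcal F_V^{(n)}$, and $\sigma(p_t:t\le0)\subseteq\sigma(U_t:t\le0)$ (similarly for the future). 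Consequently every coefficient $c\in\{\alpha,\beta,\varphi\}$ of $(U_n)$ is squeezed between that of $\mathcal X\times\mathcal V$ and that of $\mathcal X$, so it suffices to prove the \emph{noise lemma}: adjoining the independent sequence leaves the coefficients unchanged, $c_{\mathcal X\times\mathcal V}(n)=c_{\mathcal X}(n)$ for all $n\ge1$.

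For the qualitative cases I would argue on the product system directly. As $\mathcal V$ is Bernoulli it is mixing, hence weakly mixing; thus $\mathcal X\times\mathcal V$ is ergodic whenever $\mathcal X$ is (ergodic $\times$ weakly mixing is ergodic), weakly mixing whenever $\mathcal X$ is, and mixing whenever $\mathcal X$ is. Since ergodicity, weak mixing and mixing all pass to factors, $(U_n)$ inherits each property; the converse implications are immediate because $(X_n)$ is a factor of $(U_n)$. This settles ergodicity, weak mixing and mixing.

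The main obstacle is the noise lemma \emph{with rates}, and I expect the $\beta$- and $\varphi$-coefficients, in the conditional form used in the statement, to be the fiddliest. The mechanism is uniform, however: fix $n\ge1$ and take $A\in\mathcal P_x\vee\mathcal P_V$, $C\in\mathcal F_x^{(n)}\vee\mathcal F_V^{(n)}$. Because $\mu'=\nu\otimes\lambda$ is a product and the blocks $(V_t)_{t\le0}$, $(V_t)_{t\ge n}$ are independent, I would slice in the noise: for a.e.\ fixed $v^-=(V_t)_{t\le0}$ and $v^+=(V_t)_{t\ge n}$ the sections $A_{v^-}\in\mathcal P_x$ and $C_{v^+}\in\mathcal F_x^{(n)}$ lie in the $x$-past and $x$-future, so $|\nu(A_{v^-}\cap C_{v^+})-\nu(A_{v^-})\nu(C_{v^+})|\le\alpha_{\mathcal X}(n)$. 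Integrating over the independent laws of $v^-,v^+$ and using $\int\nu(A_{v^-})\,dv^-=\mu'(A)$, $\int\nu(C_{v^+})\,dv^+=\mu'(C)$ collapses the cross term and gives $|\mu'(A\cap C)-\mu'(A)\mu'(C)|\le\alpha_{\mathcal X}(n)$; the reverse inequality is trivial. Replacing the strong-mixing bound on the sections by the $\varphi$-bound $|\nu(A_{v^-}\cap C_{v^+})-\nu(A_{v^-})\nu(C_{v^+})|\le\varphi_{\mathcal X}(n)\,\nu(A_{v^-})$, respectively by carrying the conditional expectation through the same slicing for $\beta$, yields the analogous conclusions. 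Together with the squeezing this gives $c_U(n)=c_{\mathcal X}(n)$ for $c\in\{\alpha,\beta,\varphi\}$, completing the construction with identical rates.
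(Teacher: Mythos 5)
Your proposal is correct, and it reaches the theorem by a genuinely different route, even though it manufactures exactly the same measure: the law of your randomized quantile transform $U_n=G(X_n,V_n)$ is $\mu=\int\mu_\omega\,d\nu(\omega)$ with $\mu_\omega$ precisely the paper's canonical fibre measures (point mass at $(F^{(k)})^{-1}(\omega_n^{(k)})$ off the jumps, normalized Lebesgue measure on $A_i^{(k)}$ at the jumps), as you yourself note. The difference lies entirely in the verification. The paper proves measurability of $\omega\mapsto\int f\,d\mu_\omega$ (Lemma \ref{lem:147.1}), stationarity, factorization and uniform marginals by direct computation (Lemma \ref{lem:147.2}), and then treats each of $\alpha$, $\varphi$, $\beta$ separately (Lemmas \ref{lem:147.6}--\ref{lem:147.8}) by a hands-on argument on a dense class of cylinder sets: replace the $D_j^{(k)}$ by the saturated $\tilde D_j^{(k)}$, disjointify the $\tilde D(l)$, split off the sets $G(l)$ on which $\mu_\omega$ is constant, and push each estimate down to $\nu$-events via $\pi$. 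You instead exhibit $(U_n)$ as a coordinatewise factor of $\mathcal X\times\mathcal V$, with $(X_n)$ in turn a factor of $(U_n)$ via $F^{-1}$, so every coefficient is sandwiched between those of $\mathcal X$ and of $\mathcal X\times\mathcal V$, and all three rate statements reduce to a single noise lemma proved by slicing in the noise coordinates and Fubini. This buys quite a lot: measurability and stationarity come for free; the qualitative cases follow from standard product theorems (ergodic times weakly mixing is ergodic, etc.) plus passage to factors; the three separate cylinder-set arguments collapse into one mechanism; the disjointification step that the paper only sketches (``it is not difficult to check'') is avoided entirely; and your slicing yields the sharp constant, unlike the $4\alpha(n)$ covariance-inequality shortcut of Remark \ref{rem:147.1}. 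Two small points deserve fleshing out in a final write-up: the $\beta$-case, which you leave at ``carrying the conditional expectation through the slicing,'' is most cleanly done via the total-variation form of $\beta$ and the identity $\|Q_1\otimes\rho-Q_2\otimes\rho\|_{\mathrm{TV}}=\|Q_1-Q_2\|_{\mathrm{TV}}$, using that the conditional law of the $(x,V)$-future given the past factorizes as $P\bigl(x_{\ge n}\in\cdot\mid x_{\le 0}\bigr)\otimes\mathrm{law}(V_{\ge n})$; and the a.e.\ identity $(F^{(k)})^{-1}(G_k(a,v))=a$ in your fact (i) requires the (easy, but necessary) observation that the union of the maximal flat stretches of $F^{(k)}$ carries $F^{(k)}$-measure zero, the jump case being handled by $G_k(a,v)\in\,]F^{(k)}(a-),F^{(k)}(a)[$. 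Neither is a gap in the idea; both are routine.
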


Before proving the result, we give an application to a.s. approximations of empirical distribution functions. If $u=(u_1,...,u_d)$, $v=(v_1,...,v_d)\in \mathbb R^d$ we write $u\le v$ if $u_i\le v_i$ for all $i=1,...,d$.
Let $F_n(t)=\frac 1n\sum_{i=1}^n (\mathbb I_{\{X_i\le t\}}-F(t))$ ($t\in \mathbb R^d$) denote the (centered) empirical distribution function of $X_1,...,X_n$. A separable Gaussian process $K(s,t)$ on $\mathbb R^d\times [0,\infty[$ is called a Kifer process if $K(s,0)=0$, $\lim_{s_1,...,s_d\to \infty} K(s,t)=0$, $\lim_{s_i\to-\infty}K(s,t)=0$, $EK(s,t)=0$ and $EK(s,t)K(s',t')=\min\{t,t'\} \Gamma(s,s')$, for all $s,s'\in \mathbb R^d$, $t,t'\ge 0$. Here $\Gamma$ denotes the covariance function of a separable Gaussian process $G$ on $\mathbb R^d$ with $\lim_{s_1,...,s_d\to\infty} G(s)=0$ and $\lim_{s_i\to-\infty} G(s)=0$. Note that in the strongly mixing case with rates satisfying $\sum \alpha(n)^{\delta/(2+\delta)}<\infty$ for some $\delta>0$, the series
\begin{equation}\label{eq1}
\Gamma(s,s'):= Eg_1(s)g_1(s')+\sum_{n\ge 2} (Eg_1(s)g_n(s')+Eg_1(s')g_n(s))
\end{equation}
converges absolutely, where $g_n(s)=\mathbb I_{\{X_n\le s\}}-F(s)$ ($s\in \mathbb R^d$). $\Gamma$ is a positive definite, symmetric function. From \cite{147.3} we recall a theorem of  Berkes, Philipp, Pinzur:
\begin{theorem}[\cite{147.1, 147.3}]\label{theo:147.1}
For a strongly mixing, strictly stationary, $d$-dimensional process $(X_n)_{n\ge 1}$ with mixing rates $\alpha(n)=O(n^{-4-d(1+\epsilon)})$ for some $0<\epsilon\le \frac 14$ and with continuous distribution function $F$, one can redefine the empirical process $R(s,t)= [t] F_{[t]}(s)$ of $(X_n)$ on a richer probability space on which there exists a Kifer process $K$ with covariance function $\min\{t,t'\} \Gamma(s,s')$, where $\Gamma$ is defined by (\ref{eq1}), such that for some $\lambda>0$ and all $\tau>0$
$$ \sup_{0\le t\le \tau}\sup_{s\in \mathbb R^d} |R(s,t)-K(s,t)|= O(\sqrt{\tau}(\log \tau)^{-\lambda})\quad a.s. $$
\end{theorem}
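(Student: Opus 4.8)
The plan is to build the Kiefer process from a Gaussian strong approximation of the partial-sum process $R(s,t)=\sum_{i\le [t]}g_i(s)$, where $g_i(s)=\mathbb I_{\{X_i\le s\}}-F(s)$, by successively removing the dependence, discretizing the spatial parameter $s$, and matching the covariance (\ref{eq1}). First I would reduce the uniform-in-$s$ statement to a finite grid: since $F$ is continuous, for each scale I can choose points $s_1,\dots,s_N\subset\R^d$ (with $N\asymp m^d$ for mesh $1/m$) so that $F$ increases by at most $1/m$ between neighbouring cells. Using the coordinatewise monotonicity of $s\mapsto \mathbb I_{\{X_i\le s\}}$ together with the continuity of $F$, the oscillation of $R(\cdot,t)$ inside a cell is sandwiched between deterministic quantities of order $t/m$ plus a centered empirical fluctuation over a thin cell, the latter controlled by a moment inequality for strongly mixing sums. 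This turns the problem into a strong approximation for the finite-dimensional, $\R^N$-valued partial sums $\big(R(s_1,t),\dots,R(s_N,t)\big)$.

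Next I would decouple in time. Splitting $\{1,\dots,n\}$ into alternating big blocks of length $p$ and small (buffer) blocks of length $q\ll p$, the small-block contributions are negligible in second moment by the covariance bound coming from $\alpha(\cdot)$, while the big-block sums are separated by gaps of length $q$ and hence differ from independent only by $O(\alpha(q))$ in distribution. I would then invoke a coupling theorem (the Berkes--Philipp multivariate approximation lemma) to realize, on a richer probability space, genuinely independent $\R^N$-valued vectors $W_j$ having the big-block distributions, with the total reconstruction error governed by $\sum_j\alpha(q)$-type terms. This is the step where the mixing rate $\alpha(n)=O(n^{-4-d(1+\epsilon)})$ is consumed: the exponent $-4$ feeds the partial-sum approximation below, and the extra $-d(1+\epsilon)$ compensates for the $N\asymp m^d$ grid points carried through the coupling.

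With independent block sums in hand, I would apply the multidimensional KMT/Strassen--type strong approximation of Berkes--Philipp for sums of independent, non-identically distributed random vectors with finite $(2+\delta)$ moments, producing independent Gaussian vectors $Z_j$ whose covariances are chosen so that the partial sums $\sum_{j\le k} Z_j$ match the asymptotic covariance $\Gamma(s_a,s_b)$ of (\ref{eq1}); the absolute convergence of (\ref{eq1}) under $\sum\alpha(n)^{\delta/(2+\delta)}<\infty$ (implied by the rate hypothesis) guarantees that this target covariance exists and is positive definite. Interpolating these Gaussian partial sums linearly in the time variable and passing to the grid-refinement limit in $s$ yields a separable Gaussian field with covariance $\min\{t,t'\}\Gamma(s,s')$, i.e. a Kiefer process $K$. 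Finally I would patch the approximation across dyadic time blocks $[2^k,2^{k+1}]$ by a maximal inequality and combine it with the cell-oscillation bound from the first step, then optimize the free parameters $p,q,m$ against $\tau$ to collapse every error term to $O(\sqrt{\tau}(\log\tau)^{-\lambda})$.

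The hard part will be the uniform-in-$s$ control rather than the one-dimensional time approximation: one must propagate the finite-grid Gaussian approximation to a supremum over all $s\in\R^d$ while keeping the rate, which forces the grid size $N$, the block lengths, the coupling error, and the Gaussian-approximation error into a single balance. It is exactly this $d$-dependent balance that dictates the exponent $-4-d(1+\epsilon)$ in the mixing hypothesis, and handling the chaining in $d$ dimensions with monotone but discontinuous summands is the most delicate ingredient.
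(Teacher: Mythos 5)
The paper itself contains no proof of this statement: it is quoted as a known theorem of Berkes--Philipp \cite{147.1} (case $d=1$) and Philipp--Pinzur \cite{147.3} (general $d$), and the paper's only remark on its proof is that Philipp--Pinzur deduce it from the uniform-marginals case together with the quantile (factor) map. Measured against that source proof, your sketch is essentially the genuine article: big-block/small-block decomposition in time, the Berkes--Philipp coupling lemma to replace the $\mathbb R^N$-valued block sums by independent copies at a cost governed by $\alpha(q)$, a Strassen--Dudley/Prokhorov-distance strong approximation for sums of independent, non-identically distributed vectors (which is exactly why only a logarithmic rate $(\log\tau)^{-\lambda}$ is attainable rather than a KMT power rate), covariance matching via the absolutely convergent series (\ref{eq1}), and dyadic patching in the time variable. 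The one structural difference worth flagging: Philipp--Pinzur do not chain directly over a grid adapted to a general continuous $F$; they first prove the theorem for uniform one-dimensional marginals on $[0,1]^d$ and then transfer it by the coordinatewise quantile map $((F^{(1)})^{-1},\dots,(F^{(d)})^{-1})$ --- the step that consumes the continuity hypothesis, and precisely the step that the present paper's main theorem is built to replace, which is how Corollary \ref{cor:147.1} drops continuity. Your direct-grid variant is legitimate as far as it goes (continuity of $F$ forces continuity of each marginal, so a mesh with $F$-increment at most $1/m$ per cell exists, and coordinatewise monotonicity of $s\mapsto \mathbbm 1_{\{X_i\le s\}}$ plus a moment bound for mixing sums controls the cell oscillation), but it is a re-parametrization of the same reduction rather than a new route; if your write-up is meant to interface with this paper, the uniform-marginals formulation is the one that connects to its main theorem.
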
 
A slightly stronger result in the case $d=1$ has been obtained earlier in \cite{147.1} by Berkes and Philipp. The proof of the above theorem in Philipp-Pinzur (\cite{147.3}) is based on its proof in the case of uniform one-dimensional marginals and the main theorem in this note in the case of a continuous distribution function $F$. Hence we obtain immediately
\begin{corollary}\label{cor:147.1}
The Berkes-Philipp-Pinzur theorem remains true if the hypothesis of continuity of $F$ is dropped.
\end{corollary}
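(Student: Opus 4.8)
The plan is to reduce to the uniform-marginal case by the main theorem of this note and then push the approximation back through the order-preserving factor map $\pi$. First I would apply the main theorem of this note (Theorem~2.1) to the given strongly mixing, strictly stationary process $(X_n)$ --- which has rate $\alpha(n)=O(n^{-4-d(1+\eps)})$ but now an \emph{arbitrary} distribution function $F$ --- to obtain a $]0,1[^d$-valued, strictly stationary process $(U_m)$ with uniform one-dimensional marginals, the \emph{same} strong mixing rate $\alpha(n)$, and factorizing onto $(X_n)$ via $\pi=(\dots,F^{-1},F^{-1},\dots)$, so that $X_n=F^{-1}(U_n)$ coordinatewise. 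Since $(U_m)$ now has uniform (in particular continuous) one-dimensional marginals and the required rate, the uniform-marginal case on which the proof of Theorem~\ref{theo:147.1} rests applies to $(U_m)$ and furnishes a Kifer process $K_U$ with covariance $\min\{t,t'\}\,\Gamma_U(s,s')$, $\Gamma_U$ being the series (\ref{eq1}) formed from $(U_m)$, such that $\sup_{0\le t\le\tau}\sup_{s}|R_U(s,t)-K_U(s,t)|=O(\sqrt\tau(\log\tau)^{-\lambda})$ a.s., where $R_U(s,t)=[t]F^U_{[t]}(s)$.

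For the transport step I would use the coordinatewise relation $(F^{(k)})^{-1}(u)\le s_k\iff u\le F^{(k)}(s_k)$. Setting $\tilde F(s):=(F^{(1)}(s_1),\dots,F^{(d)}(s_d))$, this yields $\mathbb I_{\{X_n\le s\}}=\mathbb I_{\{U_n\le\tilde F(s)\}}$ and $F(s)=G(\tilde F(s))$ (with $G$ the distribution function of $U_1$), whence the \emph{pathwise} identity $R_X(s,t)=R_U(\tilde F(s),t)$ for the empirical processes. Defining $K_X(s,t):=K_U(\tilde F(s),t)$ and noting that $\tilde F$ maps $\mathbb R^d$ into $[0,1]^d\subset\mathbb R^d$, the supremum over $s$ for $X$ is dominated by that over all of $\mathbb R^d$ for $U$, so the a.s.\ rate transfers verbatim to $\sup_{0\le t\le\tau}\sup_{s}|R_X(s,t)-K_X(s,t)|$.

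I would then verify that $K_X$ is a genuine separable Kifer process with covariance $\Gamma_X$ given by (\ref{eq1}) for $(X_n)$. Being a deterministic reindexing of the Gaussian field $K_U$, $K_X$ is Gaussian and separable; the covariance matches termwise, since $g^U_n(\tilde F(s))=\mathbb I_{\{U_n\le\tilde F(s)\}}-G(\tilde F(s))=\mathbb I_{\{X_n\le s\}}-F(s)=g^X_n(s)$ forces $\Gamma_U(\tilde F(s),\tilde F(s'))=\Gamma_X(s,s')$, hence $EK_X(s,t)K_X(s',t')=\min\{t,t'\}\Gamma_X(s,s')$. The boundary conditions follow from the same identity: $K_X(s,0)=0$; as some $s_i\to-\infty$ one has $\tilde F(s)_i\to0$ and, since $U^{(i)}>0$ a.s., the variance degenerates to $0$; and as all $s_k\to\infty$ one has $\tilde F(s)\to(1,\dots,1)$, where the variance again vanishes.

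The main obstacle is conceptual rather than technical: essentially all the work is already packed into Theorem~2.1, which supplies the uniform-marginal process $(U_m)$ for an arbitrary $F$. Continuity of $F$ entered the Philipp--Pinzur scheme only to build, via an invertible coordinatewise transform, a process with uniform marginals; once Theorem~2.1 provides such a process unconditionally, continuity plays no further role. The only genuine checking is the verification that $K_X=K_U\circ\tilde F$ is a Kifer process with the prescribed covariance, and here the sole delicate point is that $\tilde F$ is monotone but possibly discontinuous --- yet because $\tilde F$ is deterministic and monotone this affects neither the termwise covariance identity nor separability, so the conclusion goes through and the continuity hypothesis may indeed be dropped.
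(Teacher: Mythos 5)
Your proposal is correct and follows essentially the same route as the paper: the paper's proof is the one-line observation that Philipp--Pinzur establish the theorem in the uniform-marginal case and use the continuity of $F$ only to reduce to it, so the main theorem of this note (valid for arbitrary $F$) immediately removes that hypothesis. Your write-up merely makes explicit the transfer the paper treats as immediate --- the identity $\mathbb{I}_{\{X_n\le s\}}=\mathbb{I}_{\{U_n\le\tilde F(s)\}}$ via the Galois inequality $(F^{(k)})^{-1}(u)\le s_k\iff u\le F^{(k)}(s_k)$, hence $R_X(s,t)=R_U(\tilde F(s),t)$ and $K_X=K_U\circ\tilde F$ --- and these details are all verified correctly.
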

As it is discussed in \cite{147.1} one can immediately derive some other corollaries from the last corollary, concerning the functional central limit theorem and the law of iterated logarithm. For a formulation and its discussion of such theorems we refer to \cite{147.1}.

\section{The proof of the theorem}\label{sec:147.3}
We shall use the notation introduced so far.

Denote by $a_i^{(k)}$ ($k=1,...,d; i=1,2,...$) the points in $\mathbb R$ where $F^{(k)}$ has a jump, and put $A_i^{(k)}= \left((F^{(k)})^{-1}\right)^{-1}\{a_i^{(k)}\}\subset ]0,1[$.  The pre-image with respect to $F^{-1}$ of the partition of $\mathbb R^d$ into points is a partition of $]0,1[^d$, denoted by $\gamma$. It consists of  sets $C_1\times...\times C_d$ where $C_k$ is either a point or some $A_i^{(k)}$ ($k=1,...,d$). Equally we may consider $\gamma$ as a partition on $(]0,1[^d)^{\mathbb Z}$ by  partitioning the $0$-coordinate and then we can apply the shift transformation $S$ on $(]0,1[^d)^\mathbb Z$, $S(x(n)_{n\in \mathbb Z})=(y(n))_{n\in \mathbb Z}$, $y_{n+1}= x_n$, to $\gamma$, obtaining $S^n\gamma $ ($n\in\mathbb Z$). The common refinement of all partitions $S^n\gamma$ ($n\in \mathbb Z$) is denoted by $\Sigma(\gamma)$. Then, for $\nu$-a.e.~$\omega\in (\mathbb R^d)^\mathbb Z$  there exists an atom $A(\omega)\in \Sigma(\gamma)$ such that $\pi(y)=\omega$ for all $y\in A(\omega)$. Since $\Sigma(\gamma)$ is algebra-isomorphic (mod $\nu$) to the  Borel field on $(\mathbb R^d)^\mathbb Z$, $\nu$ can be considered  as a measure on $\Sigma(\gamma)$.

In order to prove the theorem, we shall define canonical measures $\mu_\omega$ on the sets $A(\omega)$ in an appropriate way, obtaining $\mu$ by integrating with respect to $\nu$. For $s\in \mathbb R$ and $k=1,...,d$ defines $\mu(s,k)$ on $]0,1[$ by
$$ \mu(s,k)=\begin{cases}
\epsilon_{(F^{(k)})^{-1}(s)}\qquad & \mbox{\rm if $F^{(k)}$ has no jump at $s$}\\
\lambda(\cdot)|A_i^{(k)}) & \mbox{\rm if $s=a_i^{(k)}$ for some $i$}.
\end{cases}
$$
(Here $\epsilon_z$ denotes the unit mass in $z$ and $\lambda$ the Lebesgue measure.)

Let $A(\omega)\in \Sigma(\gamma)$, $\omega=(\omega_n)_{n\in \mathbb Z}$,  $\Omega_n=(\omega_n^{(1)},...,\omega_n^{(d)})$ be given. Define
$$ \mu_\omega= \prod_{n\in \mathbb Z} \prod_{k=1}^d \mu(\omega_n^{(k)},k),$$
to be the product measure on $A(\omega)$ whose $n$-th coordinate marginal is given by the product measure $\mu(\omega_n^{(1)},1)\times...\times \mu(\omega_n^{(d)},d)$. If $D\subset ]0,1[^d$ we shall consider it as well as a subset of  $(]0,1[^d)^\mathbb Z$  by $\{y=(y(n)\in (]0,1[^d)^\mathbb Z: \ y(0)\in D\}$.

\begin{lemma}\label{lem:147.1}
For every bounded,  measurable function $f$ on $(]0,1[^d)^\mathbb Z$ the map $\omega\to \int f d\mu_\omega$ is $\mathcal B$-measurable.
\end{lemma}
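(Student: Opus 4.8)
The plan is to read off the structure of $\mu_\omega$ as a product of the elementary kernels $s\mapsto\mu(s,k)$ and then bootstrap measurability from a generating class of functions to all bounded measurable $f$ by a monotone class argument. The essential building block is the following: for each fixed $k\in\{1,\dots,d\}$ and each Borel set $B\subseteq\,]0,1[$, the map $s\mapsto\mu(s,k)(B)$ is Borel measurable on $\mathbb R$. This is visible from the definition. Off the countable set of jump points $\{a_i^{(k)}:i\ge1\}$ one has $\mu(s,k)(B)=\mathbb I_B\big((F^{(k)})^{-1}(s)\big)$, which is measurable because $(F^{(k)})^{-1}$ is non-decreasing and hence Borel; at each jump point $s=a_i^{(k)}$ the value is the constant $\lambda(B\cap A_i^{(k)})/\lambda(A_i^{(k)})$. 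Since redefining a measurable function on a countable set does not affect measurability, $s\mapsto\mu(s,k)(B)$ is measurable, and passing from indicators through simple functions to bounded measurable $h$ shows that $s\mapsto\int h\,d\mu(s,k)$ is measurable for every bounded measurable $h$ on $]0,1[$. In other words each $\mu(\cdot,k)$ is a Markov kernel.

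Next I would handle cylinder rectangles and extend. For a finite $E\subseteq\mathbb Z$ and Borel sets $B_n^{(k)}\subseteq\,]0,1[$, let $f=\mathbb I_R$ be the indicator of the rectangle $R=\{y:\ y_n^{(k)}\in B_n^{(k)}\text{ for }n\in E,\ k=1,\dots,d\}$. Because $\mu_\omega$ is by construction the product measure $\prod_n\prod_k\mu(\omega_n^{(k)},k)$, Fubini's theorem gives
$$\int f\,d\mu_\omega=\prod_{n\in E}\prod_{k=1}^d\mu\big(\omega_n^{(k)},k\big)\big(B_n^{(k)}\big),$$
and each factor is a measurable function of the single coordinate $\omega_n^{(k)}$ by the previous step, so the finite product is $\mathcal B$-measurable in $\omega$. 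These indicators form a multiplicative class generating the Borel $\sigma$-field of $(]0,1[^d)^{\mathbb Z}$. Letting $\mathcal H$ denote the class of bounded measurable $f$ for which $\omega\mapsto\int f\,d\mu_\omega$ is $\mathcal B$-measurable, one checks that $\mathcal H$ is a vector space containing the constants and closed under bounded monotone limits (by dominated convergence, each $\mu_\omega$ being a probability measure); since $\mathcal H$ contains the above multiplicative generating class, the functional monotone class theorem gives that $\mathcal H$ consists of all bounded measurable functions, which is the assertion.

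The main obstacle is really the first step: verifying that the two-case assignment $s\mapsto\mu(s,k)$ is a genuine measurable kernel, the delicate point being to confirm that the modification at the countably many jump values $a_i^{(k)}$ is harmless and that $(F^{(k)})^{-1}$ supplies a measurable selection at the continuity points. Once the kernel property is secured, the product form of $\mu_\omega$ renders the cylinder case a one-line computation and the monotone class passage is entirely routine.
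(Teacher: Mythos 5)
Your proof is correct, and although it shares the paper's overall skeleton (compute $\int\mathbbm 1_D\,d\mu_\omega$ on finite-dimensional cylinder sets, then extend to all bounded measurable $f$ — an extension the paper compresses into its final sentence), the central measurability step is handled by a genuinely different mechanism. The paper restricts attention to cylinders $D=\bigcap_{j=-n}^{n}S^jD_j$ \emph{adapted} to the partition, each side $D_j^{(k)}$ lying either inside some $A_i^{(k)}$ or inside $]0,1[\setminus\bigcup_iA_i^{(k)}$, and evaluates the integral globally as $\mathbbm 1_{\pi D}(\omega)$ times the constant $\prod\lambda(D_j^{(k)})/\lambda(A_i^{(k)})$; measurability is thereby pushed onto the forward image $\pi D$, which the paper asserts is Borel without argument (this does hold, but it needs a small justification, e.g.\ injectivity of $F^{-1}$ off the jump fibers together with monotonicity, or Lusin--Suslin). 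You instead localize the difficulty to the one-coordinate maps $s\mapsto\mu(s,k)(B)$, verifying a genuine Markov-kernel property via the monotonicity of $(F^{(k)})^{-1}$ and the harmlessness of redefining on the countable jump set, after which arbitrary (not merely adapted) rectangles come for free from the product formula and the functional monotone class theorem finishes. Your route is the more standard and self-contained one, and it sidesteps the image-measurability question entirely; the paper's adapted-cylinder computation is, however, not wasted effort in context, since precisely that formula — constancy of the $\mu_\omega$-mass over sets of type $\tilde D$ and the resulting identity of $\mu$ and $\nu$ on $\Sigma(\gamma)$ — is what gets reused in Lemma \ref{lem:147.2} and in the mixing-rate Lemmas \ref{lem:147.6}--\ref{lem:147.8}, so the paper's proof doubles as groundwork for the rest of the section. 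One cosmetic caveat: like the paper's displayed definition, you read $\mu(s,k)=\epsilon_{(F^{(k)})^{-1}(s)}$ literally, writing $\mu(s,k)(B)=\mathbbm 1_B\bigl((F^{(k)})^{-1}(s)\bigr)$; at a continuity point $s$ of $F^{(k)}$ the intended unit mass sits at $F^{(k)}(s)\in\,]0,1[$, the unique point of the fiber $\bigl((F^{(k)})^{-1}\bigr)^{-1}\{s\}$, but since $F^{(k)}$ is likewise non-decreasing your measurability argument goes through verbatim under the corrected reading.
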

\begin{proof}
Let $D_j=D_j^{(1)}\times...\times D_j^{(d)}$ ($j=-n,...,n$) where each $D_j^{(k)}$ is contained in some $A_i^{(k)}$ or in $]0,1[\setminus\bigcup_i A_i^{(k)}$. Set  $D=\bigcap_{j=-n}^n S^jD_j$. Then, if $A(\omega)\cap D=\emptyset$, $\int \mathbbm 1_D d\mu_\omega=0$, and if $A(\omega)\cap D\ne \emptyset$, then for all $D_j^{(k)}\subset ]0,1[\setminus \bigcup_i A_i^{(k)}$ we have  $\omega_j^{(k)}\in D_j^{(k)}$ and for all $D_j^{(k)}\subset A_i^{(k)}$ we have  $\omega_j^{(k)}\in A_i^{(k)}$. It follows that for all $\omega\not\in \pi D$ \ $\int \mathbbm 1_D d\mu_\omega=0$ and for all $\omega\in \pi D$
$$ \int \mathbbm 1_D d\mu_\omega= \prod_k\frac{\lambda(D_j^{(k)}}{\lambda(A_i^{(k)}}$$
where the product extends over all $j,k$ such that $D_j^{(k)}\subset A_i^{(k)}$ for some $i$. Since $\pi D$ is $\mathcal B$ measurable, we have the measurability of $\omega\to \int \mathbbm 1_D d\mu_\omega$. Consequently, every bounded, measurable function has this property.
\end{proof}

Let us now define $\mu$. We set for a bounded, measurable function $f$ on $(]0,1[^d)^\mathbb Z$
\begin{equation}\label{eq:147.2}
\int f d\mu =\int\int f d\mu_\omega d\nu(\omega).
\end{equation}
By Lemma \ref{lem:147.1} the right hand side of (\ref{eq:147.2}) is well defined.

\begin{lemma}\label{lem:147.2}
The relation (\ref{eq:147.2}) defines a stationary probability measure $\mu$ on $(]0,1[^d)^\mathbb Z$, which - by $\pi$ - is transported to $\nu$ and which has uniformly distributed one-dimensional marginals.
\end{lemma}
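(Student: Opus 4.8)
The plan is to verify the four assertions — that $\mu$ is a probability measure, that it is $S$-invariant, that $\pi\mu=\nu$, and that its one-dimensional marginals are Lebesgue — the first three being formal and the last carrying the real content. First I would record that $\mu(B):=\int\mu_\omega(B)\,d\nu(\omega)$ is well defined for every Borel set $B$: by Lemma \ref{lem:147.1} applied to $f=\mathbbm{1}_B$ the integrand $\omega\mapsto\mu_\omega(B)$ is $\calb$-measurable, and since every $\mu_\omega$ is a probability measure (a product of the probability measures $\mu(\omega_n^{(k)},k)$) it is bounded by $1$. Finite additivity is immediate from linearity, and countable additivity follows by writing $\mu_\omega(\bigsqcup_j B_j)=\sum_j\mu_\omega(B_j)$ pointwise in $\omega$ and interchanging sum and integral by monotone convergence (all terms nonnegative). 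Normalization is $\mu((]0,1[^d)^{\Z})=\int\mu_\omega((]0,1[^d)^{\Z})\,d\nu=\int 1\,d\nu=1$, and (\ref{eq:147.2}) is then just integration of bounded $f$ against this $\mu$.

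For the factor property I would use that $\mu_\omega$ is concentrated on the atom $A(\omega)\in\Sigma(\gamma)$, on which $\pi$ is constant equal to $\omega$. Hence for Borel $B\subset(\R^d)^{\Z}$ the set $\pi^{-1}B\cap A(\omega)$ equals $A(\omega)$ if $\omega\in B$ and is empty otherwise, so $\mu_\omega(\pi^{-1}B)=\mathbbm{1}_B(\omega)$; integrating against $\nu$ gives $\mu(\pi^{-1}B)=\nu(B)$, i.e.\ $\pi\mu=\nu$. For stationarity the key observation is the intertwining $S\mu_\omega=\mu_{T\omega}$, where $T$ is the shift on $(\R^d)^{\Z}$ and $\pi S=T\pi$: since $\mu_\omega$ is the product measure whose coordinate-$n$ marginal is $\prod_k\mu(\omega_n^{(k)},k)$, pushing forward by $S$ shifts the marginals one place, producing exactly $\mu_{T\omega}$ because $(T\omega)_n=\omega_{n-1}$. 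Then, with $h(\omega)=\mu_\omega(B)$ measurable by Lemma \ref{lem:147.1},
$$\mu(S^{-1}B)=\int\mu_\omega(S^{-1}B)\,d\nu(\omega)=\int\mu_{T\omega}(B)\,d\nu(\omega)=\int h(T\omega)\,d\nu(\omega)=\int h\,d\nu=\mu(B),$$
the penultimate equality being the $T$-invariance of $\nu$.

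The hard part will be the uniform marginals. By stationarity it suffices to treat the $0$-th coordinate, and since the coordinate measures factor over $k$ it is enough to show that for each $k$ and each Borel $E\subset]0,1[$ the $k$-th component of the $0$-th marginal of $\mu$ equals $\lambda(E)$. Unwinding the definition and using that the law of $\omega_0^{(k)}$ under $\nu$ is $F^{(k)}$, this reduces to the identity
$$\int_{\R}\mu(s,k)(E)\,dF^{(k)}(s)=\lambda(E).$$
I would prove it by splitting $dF^{(k)}$ into its atomic and continuous parts. The atoms sit exactly at the jump points $a_i^{(k)}$, with mass equal to the jump $=\lambda(A_i^{(k)})$; since $\mu(a_i^{(k)},k)(E)=\lambda(E\cap A_i^{(k)})/\lambda(A_i^{(k)})$, these contribute $\sum_i\lambda(E\cap A_i^{(k)})=\lambda\bigl(E\cap\bigcup_i A_i^{(k)}\bigr)$. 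On the continuous part $\mu(s,k)=\epsilon_{(F^{(k)})^{-1}(s)}$, so its contribution is $\int\mathbbm{1}_E\bigl((F^{(k)})^{-1}(s)\bigr)\,dF^{(k)}_c(s)$, and the point to check is that the quantile map $(F^{(k)})^{-1}$ pushes the continuous part of $F^{(k)}$ forward to Lebesgue measure on $]0,1[\setminus\bigcup_i A_i^{(k)}$ — equivalently that $(F^{(k)})^{-1}$ restricted to the complement of the flat intervals is, up to a null set, a measure-preserving bijection onto that complement.

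Granting this, the continuous part contributes $\lambda\bigl(E\setminus\bigcup_i A_i^{(k)}\bigr)$, and the two pieces add up to $\lambda(E)$. This last push-forward statement about the quantile function — in essence the Rohlin disintegration of Lebesgue measure along $(F^{(k)})^{-1}$, whose image is $F^{(k)}$ and whose conditional measures are precisely the $\mu(s,k)$ — is the one genuinely non-formal step, and I expect it to be where the care is required.
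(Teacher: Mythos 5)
Your proposal is correct, and its core --- disintegrating $\mu$ over $\nu$ via the fibre measures $\mu_\omega$ and splitting the marginal computation at the flat pieces $A_i^{(k)}$ of the quantile function --- is the same as the paper's; the differences in execution are worth recording. For the measure property the paper goes through continuous functions and the Riesz representation theorem (linearity plus $f_n\downarrow 0$ and dominated convergence), whereas you verify countable additivity directly by monotone convergence; both work, yours being slightly more elementary. For $\pi\mu=\nu$ the paper checks agreement of $\mu$ and $\nu$ on $\Sigma(\gamma)$, relying on the earlier remark that $\Sigma(\gamma)$ is algebra-isomorphic (mod $\nu$) to the Borel field of $(\R^d)^{\Z}$; your one-line identity $\mu_\omega(\pi^{-1}B)=\mathbbm{1}_B(\omega)$, coming from $\mu_\omega$ being carried by the atom $A(\omega)$ on which $\pi\equiv\omega$, gives the same conclusion more directly. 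Notably, the paper's proof of this lemma never addresses stationarity at all --- the intertwining $\int h\,d\mu_\omega=\int h\circ T\,d\mu_{T\omega}$ appears only later, inside the proof of the ergodicity lemma --- so your explicit verification of $S\mu_\omega=\mu_{T\omega}$ (the marginal indices do match, since $(T\omega)_n=\omega_{n-1}$) and the ensuing invariance computation fills a step the paper leaves silent. For the marginals, your decomposition of $dF^{(k)}$ into atomic and continuous parts is the measure-side dual of the paper's decomposition of the test set $D^{(k)}$ into its portions inside and outside $\bigcup_i A_i^{(k)}$; the content is identical, including the identification of the jump at $a_i^{(k)}$ with $\lambda(A_i^{(k)})$. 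The single step you leave ``granted'' --- that the quantile transform carries the continuous part of $F^{(k)}$ to Lebesgue measure off the flats --- is precisely the step the paper also asserts without proof (its line $P(X_0^{(k)}\in (F^{(k)})^{-1}(D^{(k)}))=\lambda(D^{(k)})$), and it does hold: writing $Q=(F^{(k)})^{-1}$, one has $Q_*\lambda=dF^{(k)}$ with $\lambda$ restricted to $\bigcup_i A_i^{(k)}$ accounting exactly for the atomic part, so $Q_*(\lambda|_{(\bigcup_i A_i^{(k)})^c})=dF^{(k)}_c$; and for $u\notin\bigcup_i A_i^{(k)}$ the point $Q(u)$ is a continuity point of $F^{(k)}$, whence $F^{(k)}(Q(u))=u$, so $F^{(k)}$ inverts $Q$ off the flats and pushes $dF^{(k)}_c$ back to $\lambda$ restricted to $]0,1[\setminus\bigcup_i A_i^{(k)}$, which closes your argument. (You also read the paper's definition $\mu(s,k)=\epsilon_{(F^{(k)})^{-1}(s)}$ in the only way it can be meant for a measure on $]0,1[$, namely the Dirac at the unique $u$ with $(F^{(k)})^{-1}(u)=s$, i.e.\ at $F^{(k)}(s)$.)
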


\begin{proof}
First consider (\ref{eq:147.2}) for bounded, continuous functions $f$ on $(]0,1[^d)^\mathbb Z$. Clearly, the right hand side of (\ref{eq:147.2}) is linear in $f$ and if $f_n\downarrow 0$ pointwise, then $\int f_n d\mu_\omega\downarrow 0$ pointwise, hence by the dominated convergence theorem $\int f_n d\mu\to 0$. Hence by the Riesz representation theorem, (\ref{eq:147.2}) defines a measure, which clearly is normalized.

In order to see that $\pi\mu=\nu$ it suffices to show that $\nu$ and $\mu$ are identical on $\Sigma(\alpha)$. Let $D=\bigcap_{j=-n}^n T^jD_j\in T^{-n}\alpha\wedge...\wedge T^n\alpha, \ D_j= D_j^{(1)}\times...\times D_j^{(d)}$. Then we may assume $D_j^{(k)} \subset ]0,1[\setminus \bigcup_i A_i^{(k)}$ or $D_j^{(k)}= A_i^{(k)}$ for some $i$. Hence by (\ref{eq:147.2}) and the argument in the proof of Lemma \ref{lem:147.1}
$$ \mu(D)= \int \mathbbm 1_D d\mu=\int \mathbbm 1_{\pi D} d\nu= \nu(\pi(D)).$$

The uniform distribution is calculated as follows. Let 
$$D= ]0,1[\times...\times ]0,1[\times D^{(k)}\times ]0,1[\times...\times]0,1[\quad k=1,...,d.$$
If $D^{(k)}\subset ]0,1[\setminus \bigcup_i A_i^{(k)}$, then by (\ref{eq:147.2})
\begin{eqnarray*}
\mu(D)&=& \nu(\pi D)= \nu(p_0\in F^{-1}(D)\\
&=& \mu(p_0^{(k)}\in (F^{(k)})^{-1} (D^{(k)})\\
&=& P(X_0^{(k)}\in (F^{(k)})^{-1}(D^{(k)})=\lambda(D^{(k)}).
\end{eqnarray*}
If $D^{(k)}\subset \bigcup_i A_i^{(k)}$ for some $i$, then by (\ref{eq:147.2})
$$ \mu(D)= \int_{\pi  D} \frac{\lambda(D^{(k)}}{\lambda(A_i^{(k)}} d\nu =
\frac{\lambda(D^{(k)}}{\lambda(a_i^{(k)}} P(X_0^{(k)}=a_i^{(k)})= \lambda (D^{(k)}).$$
\end{proof}

\begin{lemma}\label{lem:147.3}
If $\nu$ is ergodic then $\mu$ is also ergodic. 
\end{lemma}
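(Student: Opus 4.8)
The plan is to read the construction $\mu=\int \mu_\omega\,d\nu(\omega)$ as a relatively ergodic extension of the base system $((\mathbb R^d)^\mathbb Z,\mathcal B,\nu,T)$, so that ergodicity of $\nu$ forces ergodicity of $\mu$. First I would record the equivariance of the fibre measures, namely $S_*\mu_\omega=\mu_{T\omega}$ for $\nu$-a.e.~$\omega$. This is immediate from the explicit product form defining $\mu_\omega$: the $n$-th marginal of $S_*\mu_\omega$ is $\prod_k\mu(\omega_{n-1}^{(k)},k)$, which is exactly the $n$-th marginal of $\mu_{T\omega}$ since $(T\omega)_n=\omega_{n-1}$; as both are product measures over the coordinates, the identity follows. (This equivariance is the pointwise form of the stationarity already established in Lemma~\ref{lem:147.2}.)

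Now let $g\in L^2(\mu)$ satisfy $g\circ S=g$, and set $\bar g(\omega)=\int g\,d\mu_\omega$, which is $\mathcal B$-measurable by Lemma~\ref{lem:147.1} and represents $E_\mu[g\mid \pi^{-1}\mathcal B]$. Using the invariance of $g$ together with the equivariance above, $\bar g(T\omega)=\int g\circ S\,d\mu_\omega=\int g\,d\mu_\omega=\bar g(\omega)$, so $\bar g$ is $T$-invariant and hence $\nu$-a.e.~constant because $\nu$ is ergodic. Subtracting this constant I may assume $\int g\,d\mu_\omega=0$ for $\nu$-a.e.~$\omega$, and the task reduces to proving $g=0$, i.e.~that the fibres carry no invariant information.

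The core step exploits that each $\mu_\omega$ is a product measure across the time coordinate. Fix $\eps>0$ and choose a bounded cylinder function $h_N$ depending only on the coordinates $n\in[-N,N]$ with $\|g-h_N\|_{L^2(\mu)}<\eps$. For $m>2N$ the time blocks $[-N,N]$ and $[m-N,m+N]$ are disjoint, so under $\mu_\omega$ the functions $h_N$ and $h_N\circ S^m$ are independent (a deterministic non-jump coordinate being independent of everything causes no difficulty); writing $\psi_N(\omega):=\int h_N\,d\mu_\omega$ and using equivariance to rewrite $\int h_N\circ S^m\,d\mu_\omega=\psi_N(T^m\omega)$, this gives
$$ \int h_N\,(h_N\circ S^m)\,d\mu_\omega=\psi_N(\omega)\,\psi_N(T^m\omega),\qquad m>2N, $$
and integrating against $\nu$ yields $\int h_N\,(h_N\circ S^m)\,d\mu=\int \psi_N(\omega)\,\psi_N(T^m\omega)\,d\nu(\omega)$. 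Since $g\circ S^m=g$, one has $\int g^2\,d\mu=\int g\,(g\circ S^m)\,d\mu$ for every $m$, and replacing $g$ by $h_N$ changes this by at most $C\eps$ uniformly in $m$. Averaging over $2N<m\le M$ and letting $M\to\infty$, the right-hand side tends to $\bigl(\int\psi_N\,d\nu\bigr)^2=\bigl(\int h_N\,d\mu\bigr)^2$ by the mean ergodic theorem applied to $\psi_N\in L^2(\nu)$. As $N\to\infty$ we have $\int h_N\,d\mu\to\int g\,d\mu=0$, so $\int g^2\,d\mu\le C\eps$ for every $\eps>0$; hence $g$ is constant and $\mu$ is ergodic.

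I expect the main obstacle to be precisely this core step: certifying the $\mu_\omega$-independence of separated time blocks and assembling the correlation estimate so that the error introduced by the cylinder approximation remains controlled \emph{uniformly} in the shift $m$. Everything else is bookkeeping, but the factorization of $\int h_N(h_N\circ S^m)\,d\mu_\omega$ into $\psi_N(\omega)\psi_N(T^m\omega)$ is where the whole argument lives, and it is what converts the fibrewise product structure into the base-level ergodic average that the mean ergodic theorem can close out.
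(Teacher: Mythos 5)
Your proof is correct and takes essentially the same route as the paper: both arguments rest on the fibrewise factorization $\int h_N\,(h_N\circ S^m)\,d\mu_\omega=\psi_N(\omega)\,\psi_N(T^m\omega)$ for time blocks separated by $m>2N$ (the product structure of $\mu_\omega$), on the equivariance $\int h\circ S\,d\mu_\omega=\int h\,d\mu_{T\omega}$, and on the mean ergodic theorem for the ergodic base $\nu$. The only cosmetic difference is packaging: you verify that invariant functions are constant via a cylinder approximation, whereas the paper checks the equivalent Ces\`aro-correlation criterion $\lim_n\frac 1n\sum_{0\le k<n}\int f\,(g\circ T^k)\,d\mu=\int f\,d\mu\int g\,d\mu$ directly on cylinder functions and extends by density.
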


\begin{proof}
We have to show that
$$ \lim_{n\to\infty} \frac 1n \sum_{0\le k<n} \int f\cdot(g\circ T^k) d\mu = \int fd\mu\int gd\mu$$
for every pair $(f,g)$ of bounded measurable functions.

Note that the functions $\omega\to\int fd\mu_\omega$ and $\omega\to \int g d\mu_\omega$ are bounded and measurable (Lemma \ref{lem:147.1}). Moreover, by construction, ($h$ measurable, bounded)
$$ \int hd\mu_\omega= \int h\circ T d\mu_{T(\omega)}$$
where $T$ denotes the shift (to the right) on $(\mathbb R^d)^\mathbb Z$. It follows now from (\ref{eq:147.2}) that for $\sigma(\pi_t:\ -N\le t\le N)$ measurable functions
\begin{eqnarray*}
&& \lim \frac 1n \sum\int f\cdot (g\circ T^k) d\mu = \lim \frac 1n \sum \int\int f\cdot(g\circ T^k) d\mu_\omega d\nu(\omega)\\
&& =\lim \frac 1n \sum\int\left[\int fd\mu_\omega\int g\circ T^k d\mu_\omega\right]d\nu(\omega)\\
&&= \lim \frac 1n \sum \int\left[\int fd\mu_\omega\int g d\mu_{T(\omega)}\right]d\nu(\omega)\\
&& = \int\int fd\mu_\omega d\nu(\omega)\cdot \int\int g d\mu_\omega  d\nu(\omega)= \int f d\mu \int gd\mu,
\end{eqnarray*}
since $\nu$ is assumed to be ergodic.
\end{proof}

\begin{lemma}\label{lem:147.4}
If $\nu$ is weakly mixing, then $\mu$ is also weakly mixing.
\end{lemma}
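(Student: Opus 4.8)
The plan is to mimic the proof of Lemma~\ref{lem:147.3} almost verbatim, replacing the Ces\`aro limit by the weak-mixing criterion expressed along a density-one subsequence. Recall that $\nu$ is weakly mixing if and only if, for every pair of bounded measurable functions $\Phi,\Psi$ on $(\mathbb R^d)^{\mathbb Z}$,
$$ \lim_{n\to\infty}\frac 1n\sum_{0\le k<n}\left|\int \Phi\cdot(\Psi\circ T^k)\,d\nu-\int\Phi\,d\nu\int\Psi\,d\nu\right|=0. $$
The goal is to establish the analogous statement for $\mu$ and all bounded measurable $f,g$ on $(]0,1[^d)^{\mathbb Z}$.

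First I would reduce to the case where $f$ and $g$ are $\sigma(\pi_t:\ -N\le t\le N)$-measurable for some fixed $N$, since such functions are dense and the quantities in question depend continuously (in $L^1$) on $f$ and $g$. For such $f,g$, the key structural identities from Lemma~\ref{lem:147.3} remain available: the fibered decomposition (\ref{eq:147.2}), the equivariance $\int h\,d\mu_\omega=\int h\circ T\,d\mu_{T(\omega)}$, and the fact that under $\mu_\omega$ the coordinates factorize, so that for $k>2N$ the integrals $\int f\,d\mu_\omega$ and $\int g\circ T^k\,d\mu_\omega$ involve disjoint blocks of coordinates and hence
$$ \int f\cdot(g\circ T^k)\,d\mu_\omega=\int f\,d\mu_\omega\cdot\int g\circ T^k\,d\mu_\omega=\int f\,d\mu_\omega\cdot\int g\,d\mu_{T^k\omega}. $$
Setting $\Phi(\omega)=\int f\,d\mu_\omega$ and $\Psi(\omega)=\int g\,d\mu_\omega$, which are bounded and $\mathcal B$-measurable by Lemma~\ref{lem:147.1}, I would then write, for $k>2N$,
$$ \int f\cdot(g\circ T^k)\,d\mu=\int \Phi\cdot(\Psi\circ T^k)\,d\nu. $$

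With this reduction in hand, the weak-mixing property of $\nu$ applied to $\Phi,\Psi$ yields directly that
$$ \frac 1n\sum_{0\le k<n}\left|\int f\cdot(g\circ T^k)\,d\mu-\int f\,d\mu\int g\,d\mu\right|\longrightarrow 0, $$
because $\int\Phi\,d\nu=\int f\,d\mu$ and $\int\Psi\,d\nu=\int g\,d\mu$ by (\ref{eq:147.2}), and the finitely many terms with $k\le 2N$ contribute $O(N/n)\to 0$ to the Ces\`aro average. Since this is exactly the $L^1$-Ces\`aro characterization of weak mixing for $\mu$, the lemma follows.

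The step I expect to require the most care is the density-one/approximation argument justifying the reduction to finitely-coordinate-dependent $f,g$: one must check that the absolute-value Ces\`aro averages for $\mu$ are stable under $L^1(\mu)$-approximation of $f,g$, which follows from the uniform bound $\left|\int f\cdot(g\circ T^k)\,d\mu-\int f\,d\mu\int g\,d\mu\right|\le \|f-f'\|_{L^1(\mu)}\|g\|_\infty+\cdots$ together with the shift-invariance of $\mu$ from Lemma~\ref{lem:147.2}. Everything else is a transcription of the ergodic computation, with the product structure of $\mu_\omega$ doing the same decoupling work as before and the factor $N/n$ absorbing the boundary terms.
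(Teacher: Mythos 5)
Your proposal is correct and follows essentially the same route as the paper's own proof: reduce to functions measurable with respect to finitely many coordinates, use the product structure of $\mu_\omega$ to decouple $f$ and $g\circ T^k$ for $k$ large, pass via the equivariance identity to $\Phi(\omega)=\int f\,d\mu_\omega$ and $\Psi(\omega)=\int g\,d\mu_\omega$, and apply the $L^1$-Ces\`aro characterization of weak mixing for $\nu$. In fact you make explicit two details the paper leaves implicit (the $k>2N$ threshold and the $L^1(\mu)$-approximation step justifying the reduction), so your write-up is a faithful, slightly more careful transcription of the paper's argument.
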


\begin{proof}
We have to show that
$$ \lim_{n\to\infty} \frac 1n \sum_{0\le k<n} \left|\int f\cdot(g\circ T^k) d\mu - \int fd\mu\int gd\mu\right|=0$$
for all bounded, measurable functions $f$ and $g$.
\begin{eqnarray*}
&& \lim \frac 1n \sum\left|\int f\cdot(g\circ T^k) d\mu -\int fd\mu \int g d\mu\right|\\
&& = \lim \frac 1n \sum\left|\int f d\mu_\omega\int g\circ T^k d\mu_\omega \nu(d\omega)-\iint fd\mu_\omega d\nu(\omega)\iint g d\mu_\omega d\nu(\omega)\right|\\
&&=0,
\end{eqnarray*}
since $\nu$ is weakly mixing.
\end{proof}

\begin{lemma}\label{lem:147.5}
If $\nu$ is mixing, then $\mu$ is also mixing.
\end{lemma}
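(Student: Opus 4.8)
The plan is to establish the mixing identity $\lim_{k\to\infty}\int f\cdot(g\circ T^k)\,d\mu=\int f\,d\mu\int g\,d\mu$ first for functions $f,g$ depending on only finitely many coordinates, and then to pass to arbitrary bounded measurable $f,g$ by approximation. So I would fix $N$ and assume $f$ and $g$ are $\sigma(\pi_t:-N\le t\le N)$-measurable. For such functions, set $\Phi(\omega)=\int f\,d\mu_\omega$ and $\Psi(\omega)=\int g\,d\mu_\omega$; by Lemma \ref{lem:147.1} these are bounded and $\mathcal B$-measurable, and since $\mu_\omega=\prod_n\prod_k\mu(\omega_n^{(k)},k)$ is a product whose $n$-th marginal depends only on $\omega_n$, integrating out the coordinates with $|n|>N$ (each a probability measure of mass $1$) shows that $\Phi$ and $\Psi$ are again $\sigma(\pi_t:-N\le t\le N)$-measurable.

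The core of the argument is a decoupling step. For $k>2N$ the coordinates on which $f$ depends and those on which $g\circ T^k$ depends are disjoint, so the product structure of $\mu_\omega$ makes $f$ and $g\circ T^k$ independent under $\mu_\omega$, giving
\[
\int f\cdot(g\circ T^k)\,d\mu_\omega=\int f\,d\mu_\omega\cdot\int(g\circ T^k)\,d\mu_\omega .
\]
Iterating the covariance relation $\int h\,d\mu_\omega=\int(h\circ T)\,d\mu_{T\omega}$ from the proof of Lemma \ref{lem:147.3} yields $\int g\,d\mu_\omega=\int(g\circ T^k)\,d\mu_{T^k\omega}$, hence $\int(g\circ T^k)\,d\mu_\omega=\Psi(T^{-k}\omega)$. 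Substituting into (\ref{eq:147.2}) and using the $T$-invariance of $\nu$,
\[
\int f\cdot(g\circ T^k)\,d\mu=\int\Phi(\omega)\,\Psi(T^{-k}\omega)\,d\nu(\omega)=\int(\Phi\circ T^k)\cdot\Psi\,d\nu .
\]
Since $\Phi,\Psi$ are bounded and measurable and $\nu$ is mixing, the right-hand side converges to $\int\Phi\,d\nu\int\Psi\,d\nu$, which by (\ref{eq:147.2}) equals $\int f\,d\mu\int g\,d\mu$. This settles the finitely-based case.

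Finally I would extend the identity to arbitrary bounded measurable $f,g$: approximate them in $L^2(\mu)$ by finitely-based functions $f_m,g_m$ (the cylinder sets generate the Borel field and $\mu$ is a probability measure), and control the error by the Cauchy--Schwarz inequality together with the $T$-invariance of $\mu$, so that $\|g_m\circ T^k\|_{L^2(\mu)}=\|g_m\|_{L^2(\mu)}$; one then lets $k\to\infty$ for fixed $m$ and afterwards $m\to\infty$. The genuinely new point compared with Lemmas \ref{lem:147.3} and \ref{lem:147.4} is that no Ces\`aro averaging is available here, so the decoupling must hold for all large $k$ rather than merely on average, after which everything reduces to the single correlation $\int(\Phi\circ T^k)\Psi\,d\nu$. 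I expect the main technical care to lie in the finitely-based reduction and the final $L^2$-approximation; the rest is a direct transcription of the mixing of $\nu$ through the maps $f\mapsto\Phi$, $g\mapsto\Psi$.
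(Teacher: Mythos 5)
Your proposal is correct and is essentially the argument the paper intends: its one-line proof of Lemma \ref{lem:147.5} just points back to Lemmas \ref{lem:147.3} and \ref{lem:147.4}, whose mechanism is exactly your decoupling of $f$ and $g\circ T^k$ under the product measure $\mu_\omega$ for $k>2N$, the covariance relation $\int h\,d\mu_\omega=\int(h\circ T)\,d\mu_{T\omega}$, and the mixing of $\nu$ applied to $\Phi$ and $\Psi$, with Ces\`aro averaging simply dropped. You merely make explicit the reduction to finitely-based functions and the closing $L^2$-approximation, which the paper leaves implicit.
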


\begin{proof}
As in the foregoing lemmas show that
$$ \int f\cdot(g\circ T^k) d\mu \to \int fd\mu \int g d\mu.$$
\end{proof}

\begin{lemma}\label{lem:147.6}
If $\nu$ is strongly mixing, then $\mu$ is strongly mixing with the same rates $\alpha(n)$.
\end{lemma}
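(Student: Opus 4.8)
The plan is to transfer the strong mixing estimate for $\nu$ directly to $\mu$ by exploiting the defining relation $\int f\,d\mu=\int\!\int f\,d\mu_\omega\,d\nu(\omega)$ together with the key structural fact, already used in Lemmas \ref{lem:147.1} and \ref{lem:147.2}, that the projection $\pi$ intertwines the shifts and that the fibre measures $\mu_\omega$ factorize coordinatewise. First I would fix $N\ge 1$ and restrict attention to events $A\in\sigma(p_t:t\le 0)$ and $B\in\sigma(p_t:t\ge n)$ on $(]0,1[^d)^\mathbb Z$ depending on finitely many coordinates; by a monotone-class argument it suffices to establish the bound $|\mu(A\cap B)-\mu(A)\mu(B)|\le\alpha(n)$ on such a generating algebra. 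The main point is that the coordinate projections of $\mu_\omega$ are measurable with respect to the corresponding coordinate projection of $\omega$: the fibre marginal $\mu(\omega_t^{(k)},k)$ at time $t$ depends only on $\omega_t$. Hence the inner integrand $\omega\mapsto\int\mathbbm 1_A\,d\mu_\omega$ is $\sigma(\pi_t:t\le 0)$-measurable and $\omega\mapsto\int\mathbbm 1_B\,d\mu_\omega$ is $\sigma(\pi_t:t\ge n)$-measurable.

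Next I would write, using the product structure of $\mu_\omega$ over the disjoint index blocks $\{t\le 0\}$ and $\{t\ge n\}$,
\begin{equation}\label{eq:lem6split}
\mu(A\cap B)=\int\Bigl(\int\mathbbm 1_A\,d\mu_\omega\Bigr)\Bigl(\int\mathbbm 1_B\,d\mu_\omega\Bigr)\,d\nu(\omega),
\end{equation}
which is valid because $A$ and $B$ involve coordinates in disjoint time-blocks and $\mu_\omega$ is a product over coordinates. Setting $\phi(\omega)=\int\mathbbm 1_A\,d\mu_\omega$ and $\psi(\omega)=\int\mathbbm 1_B\,d\mu_\omega$, both bounded between $0$ and $1$, the right-hand side of \eqref{eq:lem6split} is exactly $\int\phi\,\psi\,d\nu$, while $\mu(A)\mu(B)=\int\phi\,d\nu\int\psi\,d\nu$. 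Since $\phi$ is measurable with respect to the past $\sigma$-field and $\psi$ with respect to the $n$-future $\sigma$-field under $\nu$, the strong mixing inequality for $\nu$ applies to the bounded functions $\phi,\psi$ and yields $|\int\phi\,\psi\,d\nu-\int\phi\,d\nu\int\psi\,d\nu|\le\alpha(n)\,\|\phi\|_\infty\|\psi\|_\infty\le\alpha(n)$. Taking the supremum over all such $A,B$ shows the mixing rate of $\mu$ is at most $\alpha(n)$, and the reverse inequality is immediate since $\nu=\pi\mu$ is a factor of $\mu$, so the two rates coincide.

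The step I expect to be the main obstacle is justifying the factorization \eqref{eq:lem6split} rigorously: one must check that for indicator functions $\mathbbm 1_A,\mathbbm 1_B$ of events in the two disjoint coordinate-blocks, the fibre integral $\int\mathbbm 1_A\,\mathbbm 1_B\,d\mu_\omega$ genuinely splits as the product of the two fibre integrals. This is where the coordinatewise product structure $\mu_\omega=\prod_{n}\prod_k\mu(\omega_n^{(k)},k)$ is essential, and care is needed because $A$ and $B$ may each depend on infinitely many coordinates; the clean way is to first prove the inequality for $A,B$ in the finite-dimensional generating algebra (where the product formula is transparent, exactly as in the proof of Lemma \ref{lem:147.1}) and then pass to the generated $\sigma$-fields by approximation, using that the bound $\alpha(n)$ does not involve the particular sets. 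Once this measurability-plus-factorization is in place, the proof reduces to the one-line application of the mixing bound for $\nu$ to the pair $(\phi,\psi)$, in exact parallel to the arguments for ergodicity and weak mixing in Lemmas \ref{lem:147.3} and \ref{lem:147.4}.
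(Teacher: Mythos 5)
Your proposal is correct in outline, but it is not the paper's proof of Lemma \ref{lem:147.6} --- it is, almost verbatim, the route the paper relegates to Remark \ref{rem:147.1}. There the author observes exactly your three ingredients: $\omega\mapsto\mu_\omega(A)$ is $\sigma(\pi_t\colon t\le 0)$-measurable, $\omega\mapsto\mu_\omega(B)$ is $\sigma(\pi_t\colon t\ge n)$-measurable, and $\mu_\omega(A\cap B)=\mu_\omega(A)\mu_\omega(B)$ because $\mu_\omega$ is a product over coordinates --- note this factorization holds for \emph{arbitrary} measurable $A,B$ in disjoint coordinate blocks, since a product measure makes disjoint blocks independent, so the cylinder-set approximation you flag as ``the main obstacle'' is unnecessary. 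The author then applies the covariance inequality for strong mixing and obtains $4\alpha(n)$, explicitly calling this ``a slightly weaker statement than Lemma \ref{lem:147.6}.'' This pinpoints the one genuine flaw in your write-up: the inequality you invoke, $\bigl|\int\phi\psi\,d\nu-\int\phi\,d\nu\int\psi\,d\nu\bigr|\le\alpha(n)\,\|\phi\|_\infty\|\psi\|_\infty$, is false as stated; the classical bound for bounded variables carries the constant $4$, and that constant is attained in the limit by $\pm1$-valued functions. Your conclusion can be rescued precisely because your $\phi=\mu_\omega(A)$ and $\psi=\mu_\omega(B)$ take values in $[0,1]$: centering gives $|\mathrm{Cov}_\nu(\phi,\psi)|=\tfrac14\,|\mathrm{Cov}_\nu(2\phi-1,\,2\psi-1)|\le\tfrac14\cdot 4\alpha(n)=\alpha(n)$, but this step must be said --- as written, your proof has a factor-of-$4$ gap exactly at its crux, which is the whole point of the lemma (``the same rates,'' not rates up to a constant).

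For comparison, the paper's actual proof avoids covariance inequalities altogether: it works on the dense class of finite unions of cylinder sets, replaces each cylinder $D$ by $\tilde D=\pi^{-1}\pi D\in\Sigma(\gamma)$, decomposes $A$ into pieces $\tilde D(l)\cap G(l)$ on which $\mu_\omega(G(l))$ is constant, and uses that $\mu$ restricted to $\Sigma(\gamma)$ coincides with $\nu$ to bound $\mu(A\cap B)-\mu(A)\mu(B)$ directly by $\alpha(n)$, with equality of rates for the same reason. Your (repaired) disintegration route buys brevity; the paper's combinatorial route buys a template that transfers verbatim to uniform mixing and absolute regularity (Lemmas \ref{lem:147.7} and \ref{lem:147.8}), where the disintegration trick has no equally sharp analogue --- which is presumably why the author proves the lemma this way and demotes your argument to a remark. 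Your reverse inequality is fine: since $\pi$ acts coordinatewise, $\pi^{-1}$ maps past events to past events and future to future, so $\nu$ being a factor of $\mu$ forces $\alpha_\mu(n)\ge\alpha_\nu(n)$. A pleasant by-product of your approach, once the centering is added, is that it improves the $4\alpha(n)$ of Remark \ref{rem:147.1} to $\alpha(n)$, showing the ``easier proof'' is not actually weaker.
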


\begin{proof}
Let $\alpha(n):= \sup\{|\nu(A\cap B)-\nu(A)\nu(B)|: \ A\in \sigma(\pi_t:\, t\le 0); \ B\in \sigma(\pi_t:\, t\ge n)\}$. We have to show the same relation for the measure $\mu$. It suffices to prove it for sets $A$ and $B$ belonging to  dense subclasses in the $\sigma$-algebras under consideration. These subclasses are described by finite disjoint unions  of sets $D=\bigcap_{j=r}^S T^jD_j$, $D_j=D_j^{(1)}\times ...\times D_j^{(d)}$, for which either $D_j^{(k)}\subset A_i^{(k)}$ for some $i$ or $D_j^{(k)}\subset ]0,1[\setminus \bigcup A_i^{(k)}$.  For a set $D_j=D_j^{(1)}\times...\times D_j^{(d)}$ as just described, let $\tilde D_j= \tilde D_j^{(1)}\times...\times \tilde D_j^{(d)}$ where
$\tilde D_j^{(k)}= A_i^{(k)}$ iff $D_j^{(k)}\subset A_i^{(k)}$ for some $i$ and where $\tilde D_j^{(k)}= D_j^{(k)}$ otherwise. Note that  $\pi^{-1}\pi(D_j)= \tilde D_j$ and $\pi^{-1}\pi(D)= \tilde D=\bigcap_{j=r}^s T^j\tilde D_j$.

 Let $A=\bigcup_{l=1}^L D(l)$, $D(l) =\bigcap_{j=-m}^0 T^jD_j(l)$ and $B=\bigcup_{i=n}^I C(i)$, $C(i)=\bigcap_{j=n}^{n+M} T^jC_j(i)$ belong to the dense subclasses, and assume that the $D(l)$ (resp. $C(i)$) are disjoint.
 
 The corresponding $\tilde D(l)$  (resp. $\tilde C(i)$) may not be disjoint; however, it is not difficult to check that, making the $\tilde D(l)$ (resp. $\tilde C(i)$) disjoint, we can represent $A$ (resp. $B$) by a finite union of sets $\tilde D(l) \cap G(l)$ (resp. $\tilde C(i)\cap H(i)$) where $G(l)$ is a finite union of sets of the form $\bigcap_{j=-m}^0 T^j G_j$ with $G_j^{(k)}=]0,1[$ iff 
 $D_j^{(k)}(l)\subset ]0,1[\setminus\bigcup A_s^{(k)}$ and $G_j^{(k)}\subset A_s^{(k)}$ iff $D_j^{(k)}(l)\subset A_s^{(k)}$ for some $s$ (and analogously for $H(i)$.
 
 With these decompositions of $A$ and $B$ in mind we have
 \begin{eqnarray*}
 &&\mu(A\cap B)-\mu(A)\mu(B)\\
 &=& \sum_l \mu(\tilde D(l)\cap G(l)\cap B)-\mu(\tilde D(l)\cap G(l))\mu(B)\\
 &=& \sum_l \int_{\tilde D(l)}\int \mathbb I_B \mathbb I_{G(l)} d\mu_\omega  d\nu(\omega) - \int_{\tilde D(l)}\int \mathbb I_{G(l)} d\mu_\omega d\nu(\omega)\cdot\nu(B)\\
 &=&\sum_l \mu_\omega(G(l))\left( \int_{\tilde D(l)}\int_B d\mu_\omega-\int_{\tilde D(l)}d\nu \mu(B)\right)\\
 &\le & \sum_l^* \mu(\tilde D(l)\cap B)- \mu(\tilde D(l))\mu(B)
 \end{eqnarray*}
 \begin{eqnarray*}
 &=&   \mu(\sum_l^*\tilde D(l)\cap B)- \mu(\sum_l^*\tilde D(l))\mu(B)\\
 &\le& \sum_i^*\mu(\sum_l^*\tilde D(l)\cap \tilde C(i))- \mu(\sum_l^*\tilde D(l))\mu(\tilde C(i))\\
 &=& \sum_i^*\nu(\pi\left(\sum_l^*\tilde D(l)\right)\cap \pi(\tilde C(i)))- \nu(\pi\left(\sum_l^*\tilde D(l)\right))\nu(\pi(\tilde C(i)))\\
 &\le& \alpha(n)
 \end{eqnarray*}
since on $\tilde D(l)$ $\int_{G(l)} d\mu_\omega$ is constant. Here $\sum^*$ denotes summation over positive terms.

 The converse inequality is shown analogously. One really has equality in the estimate when taking the supremum over $A$ and $B$, since this follows from the fact that $\mu$ restricted to $\Sigma(\alpha) $ coincides with $\nu$. 
\end{proof}

\begin{lemma}\label{lem:147.7}
Let the measure $\nu$ be uniformly mixing. Then $\mu$ is also uniformly mixing  with the same rate $\varphi(n)$.
\end{lemma}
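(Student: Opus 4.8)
The plan is to follow the skeleton of the proof of Lemma~\ref{lem:147.6}, reducing to the same dense subclasses of cylinder sets and exploiting the product structure of the canonical measures $\mu_\omega$, but organised around conditional rather than joint probabilities. As there, it suffices to verify $|\mu(B|A)-\mu(B)|\le\varphi(n)$ for $A=\bigcup_{l=1}^L D(l)\in\sigma(p_t:t\le 0)$ and $B=\bigcup_{i=1}^I C(i)\in\sigma(p_t:t\ge n)$ drawn from the dense subclasses, with the $D(l)$ (resp.\ $C(i)$) disjoint and $\mu(A)>0$. Introducing the saturations $\tilde D(l)=\pi^{-1}\pi D(l)$, $\tilde C(i)=\pi^{-1}\pi C(i)$ together with the within-atom factors $G(l)$, $H(i)$ exactly as in Lemma~\ref{lem:147.6}, I would first record that $\int\mathbb I_{G(l)}\,d\mu_\omega$ equals a constant $c_l\in[0,1]$ on $\tilde D(l)$ and $\int\mathbb I_{H(i)}\,d\mu_\omega$ equals a constant $e_i\in[0,1]$ on $\tilde C(i)$, and that, since $G(l)$ and $H(i)$ depend on disjoint blocks of coordinates, they are $\mu_\omega$-independent.

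The first key step is to use (\ref{eq:147.2}) and this product structure to compute
$$\mu(A\cap B)=\sum_l c_l\,\mu(\tilde D(l)\cap B),\qquad \mu(A)=\sum_l c_l\,\mu(\tilde D(l)).$$
Hence $\mu(B|A)=\sum_l w_l\,\mu(B|\tilde D(l))$ is a convex combination with weights $w_l=c_l\mu(\tilde D(l))/\mu(A)\ge 0$, $\sum_l w_l=1$ (dropping any $l$ with $\mu(\tilde D(l))=0$). Consequently
$$|\mu(B|A)-\mu(B)|\le\sum_l w_l\,\bigl|\mu(B|\tilde D(l))-\mu(B)\bigr|,$$
so it remains to bound each term by $\varphi(n)$; i.e.\ it suffices to treat a single saturated past-cylinder $\tilde D(l)$.

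For the second key step, writing $\bar D=\pi\tilde D(l)\in\sigma(p_t:t\le 0)$ and $\bar C_i=\pi\tilde C(i)\in\sigma(p_t:t\ge n)$, the same product computation gives $\mu(\tilde D(l)\cap B)=\sum_i e_i\,\nu(\bar D\cap\bar C_i)$ and $\mu(\tilde D(l))=\nu(\bar D)$, whence
$$\mu(B|\tilde D(l))-\mu(B)=\sum_i e_i\bigl[\nu(\bar C_i|\bar D)-\nu(\bar C_i)\bigr],$$
where the $\bar C_i$ are disjoint. Splitting the index set into $I^{+}=\{i:\nu(\bar C_i|\bar D)\ge\nu(\bar C_i)\}$ and $I^{-}$ its complement, and using $0\le e_i\le 1$, the sum is at most $\sum_{i\in I^{+}}[\nu(\bar C_i|\bar D)-\nu(\bar C_i)]=\nu(\bar C^{+}|\bar D)-\nu(\bar C^{+})\le\varphi(n)$ with $\bar C^{+}=\bigcup_{i\in I^{+}}\bar C_i\in\sigma(p_t:t\ge n)$, and is bounded below by $-\varphi(n)$ via $I^{-}$ in the same way. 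This yields $|\mu(B|\tilde D(l))-\mu(B)|\le\varphi(n)$ and hence $|\mu(B|A)-\mu(B)|\le\varphi(n)$. That the supremum is actually attained, so the rate is exactly $\varphi(n)$ rather than merely dominated, follows as in Lemma~\ref{lem:147.6} from the fact that $\mu$ restricted to the saturated field $\Sigma(\gamma)$ coincides with $\nu$.

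I expect the main obstacle to be this second step: controlling the signed sub-probability weights $e_i$. The convexity argument disposes cleanly of the past weights $c_l$, but the future weights $e_i$ enter a \emph{signed} sum, and one cannot simply factor $\varphi(n)$ out. The positive/negative splitting, combined with disjointness of the $\bar C_i$ and $e_i\le 1$, is precisely what reduces the signed sum to two genuine conditional-probability deviations of $\nu$, each bounded by $\varphi(n)$. Verifying that this reduction is legitimate, in particular that the passage to the dense cylinder subclasses remains compatible with the division by $\mu(A)$ in the definition of $\varphi$, is the delicate point that must be handled with the usual approximation argument for sets of positive measure.
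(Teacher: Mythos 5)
Your proof is correct and follows essentially the same route as the paper's: the same dense cylinder subclasses, the same disjointified decompositions $A=\bigcup_l \tilde D(l)\cap G(l)$, $B=\bigcup_i \tilde C(i)\cap H(i)$ with constant within-atom weights $c_l,e_i$, and the same positive-term selection device (your $I^{+}/I^{-}$ split with $\bar C^{+}$ is exactly the paper's $\sum^{*}$ and $B^{*}$). The only difference is organizational: you normalize by $\mu(A)$ first and select positive future terms per $l$, whereas the paper keeps the joint form and establishes $\mu(A\cap B)-\mu(A)\mu(B)\le \varphi(n)\mu(A)$ with a single global $B^{*}$ before decomposing the past.
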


\begin{proof}
Let
$$\varphi(n)=\sup\{|\nu(B|A)-\nu(B)|: \ A\in\sigma(\pi_t:\, t\le 0);\ B\in \sigma(\pi_t:\, t\ge n\}.$$
We shall show this relation for $\mu$ over the same dense subclasses as in Lemma \ref{lem:147.6}.

Let $A$, $B$ be given as in the proof of Lemma \ref{lem:147.6}. Then
\begin{eqnarray*}
&& \mu(A\cap B)-\mu(A)\mu(B)\\
&\le& \sum^*_k \mu(A\cap \tilde C(k))-\mu(A)\mu(\tilde C(k)) \\
&=& \mu(A\cap B^*)-\mu(A)\mu(B^*)\qquad(B^*=\sum_k^* \tilde C(k))\\
&=& \sum_l\mu_\omega(G(l)) \left[\mu(\tilde D(l)\cap B^*)- \mu(\tilde D(l))\mu(B^*)\right]\\
&\le& \varphi(n) \sum_l^* \mu_\omega(G(l))\mu(\tilde D(l))\\
&=&\le \varphi(n)\mu(A).
\end{eqnarray*}
The lower bound is shown analogously.
\end{proof}

\begin{lemma}\label{lem:147.8}
Let the measure $\nu$ be absolutely regular. Then $\mu$ is absolutely regular  with the same rates.
\end{lemma}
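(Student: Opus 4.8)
The plan is to replace the conditional form of the coefficient by its classical partition characterisation
$$\beta(n)=\tfrac12\,\sup\sum_{l,j}\left|\nu(A_l\cap B_j)-\nu(A_l)\nu(B_j)\right|,$$
the supremum running over all finite partitions $\{A_l\}$ of $(]0,1[^d)^\mathbb Z$ into $\sigma(\pi_t:t\le 0)$-sets and all finite partitions $\{B_j\}$ into $\sigma(\pi_t:t\ge n)$-sets (the precise normalising constant is immaterial, since I shall prove equality of the two coefficients). Exactly as in Lemmas \ref{lem:147.6} and \ref{lem:147.7}, it suffices to let $\{A_l\}$ and $\{B_j\}$ range over the dense subclass of cylinder partitions, so that each atom can be written $A_l=\tilde D(l)\cap G(l)$ and $B_j=\tilde C(j)\cap H(j)$, where $\tilde D(l)=\pi^{-1}\pi(\tilde D(l))$ is the saturated coarse atom containing $A_l$ and $G(l)$ prescribes the ``fractional'' behaviour in those past coordinates that fall into some $A_i^{(k)}$ (and symmetrically for $\tilde C(j),H(j)$ in the future). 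A feature of this dense subclass is that each $A_l$ lies in a single coarse atom, so the coarse images $\pi\tilde D(l)$, after identification of repetitions, form a genuine past-partition $\{E_p\}$ of $((\mathbb R^d)^\mathbb Z,\nu)$, and likewise the $\pi\tilde C(j)$ give a future-partition $\{F_q\}$.

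First I would record the basic identity coming from the product structure of $\mu_\omega$. Since $G(l)$ and $H(j)$ involve disjoint coordinate blocks ($t\le 0$ and $t\ge n\ge 1$), one has $\mu_\omega(G(l)\cap H(j))=\mu_\omega(G(l))\mu_\omega(H(j))$ on the fibre $A(\omega)$, and each factor is constant in $\omega$ on the relevant coarse atom; write $g_l=\mu_\omega(G(l))$ for $\omega\in\pi\tilde D(l)$ and $h_j=\mu_\omega(H(j))$ for $\omega\in\pi\tilde C(j)$, each being a product of ratios $\lambda(\cdot)/\lambda(A_i^{(k)})$. Then (\ref{eq:147.2}) together with $\pi\mu=\nu$ gives $\mu(A_l)=g_l\,\nu(\pi\tilde D(l))$, $\mu(B_j)=h_j\,\nu(\pi\tilde C(j))$ and $\mu(A_l\cap B_j)=g_l h_j\,\nu(\pi\tilde D(l)\cap\pi\tilde C(j))$, whence
$$\left|\mu(A_l\cap B_j)-\mu(A_l)\mu(B_j)\right| =g_l h_j\left|\nu(\pi\tilde D(l)\cap\pi\tilde C(j))-\nu(\pi\tilde D(l))\nu(\pi\tilde C(j))\right|.$$

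The decisive step is to sum these estimates and collapse the fractional weights. Grouping the terms by coarse atom $E_p$ and $F_q$ yields
$$\sum_{l,j}\left|\mu(A_l\cap B_j)-\mu(A_l)\mu(B_j)\right| =\sum_{p,q}\left|\nu(E_p\cap F_q)-\nu(E_p)\nu(F_q)\right| \Big(\sum_{l:\pi\tilde D(l)=E_p}g_l\Big)\Big(\sum_{j:\pi\tilde C(j)=F_q}h_j\Big).$$
Because $\{A_l\}$ is a partition of the whole space, the sets $G(l)$ with $\pi\tilde D(l)=E_p$ partition the fractional part of the fibre over $\pi^{-1}(E_p)$, which carries total $\mu_\omega$-mass $1$; hence $\sum_{l:\pi\tilde D(l)=E_p}g_l=1$, and likewise $\sum_{j:\pi\tilde C(j)=F_q}h_j=1$. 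The parenthesised factors therefore disappear, leaving $\sum_{p,q}|\nu(E_p\cap F_q)-\nu(E_p)\nu(F_q)|\le 2\beta(n)$. Taking the supremum over the $\mu$-partitions gives $\beta_\mu(n)\le\beta(n)$; the reverse inequality is immediate because $\mu$ and $\nu$ agree on the coarse algebra $\Sigma(\gamma)$, so purely coarse partitions already attain $\beta(n)$. Hence $\mu$ is absolutely regular with the same rate.

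I expect the main obstacle to be precisely this collapsing of the fractional masses. In Lemmas \ref{lem:147.6} and \ref{lem:147.7} one controls a single signed difference and may freely discard the negative contributions (the $\sum^*$ device); absolute regularity instead demands control of the full $L^1$-type sum over a partition, and the argument succeeds only because, over each fixed coarse atom, the fractional weights $g_l$ (resp. $h_j$) form a probability vector and so sum to $1$. The accompanying bookkeeping — checking that cylinder partitions are genuinely $\beta$-approximating, and that identifying the repeated coarse atoms $\pi\tilde D(l)$ (which, as noted in Lemma \ref{lem:147.6}, need not be distinct) really produces bona fide past- and future-partitions of $\nu$ — is the routine part.
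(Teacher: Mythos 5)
Your proposal is correct in substance, but it is a genuinely different proof from the paper's. The paper never passes to the partition characterisation of $\beta(n)$: it works directly with the conditional form used in its definition. For a finite $\sigma$-algebra $\mathcal F$ of cylinder atoms $D(j)$ in the past of the $\mu$-process it bounds $|\mu(B|D(j))-\mu(B)|$ atom by atom by $|\mu(B^*|\tilde D(j))-\mu(B^*)|$, reusing the estimates from the proofs of Lemmas \ref{lem:147.6} and \ref{lem:147.7}, and then passes from finite $\mathcal F$ to the full past $\sigma$-algebra $\mathcal B_0$ by noting that $\sup_B|\mu(B|\mathcal F)-\mu(B)|$ is a supermartingale in $\mathcal F$ and invoking martingale convergence, first over the $\mu$-side and then over a generating family $\pi\tilde{\mathcal F}$ on the $\nu$-side; equality of rates comes, exactly as in your last step, from $\mu=\nu$ on $\Sigma(\gamma)$. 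Your route buys something the paper's does not make explicit: the factorisation $\mu(A_l\cap B_j)=g_l h_j\,\nu(\pi\tilde D(l)\cap\pi\tilde C(j))$, which rests on the fibrewise independence under the product measure $\mu_\omega$ of the fractional coordinates in the disjoint blocks $t\le 0$ and $t\ge n$, together with the collapsing $\sum_l g_l=1$, isolates precisely why the fractional randomisation costs nothing in the rate, and it avoids the martingale machinery entirely. The price is that you must invoke the classical (Volkonskii--Rozanov) equivalence of the partition and conditional forms of $\beta$, since the paper's definition and the rate it asserts are in the conditional form; you flag this, and it is indeed standard.

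One step does need repair: it is not true that the coarse images $\pi\tilde D(l)$, after identifying repetitions, automatically form a partition. Distinct coarse saturations can overlap \emph{partially}: take $d=1$, a single jump preimage $J=A_1^{(1)}$ with $J=J_-\cup J_+$ disjointly, continuity region $K=K_1\cup K_2$ disjointly, and the three pairwise disjoint cylinder atoms on two past coordinates $K\times J_-$, $K_1\times J_+$, $K_2\times J_+$; their saturations are $K\times J$, $K_1\times J$, $K_2\times J$, which overlap without coinciding, and over such a configuration your identity $\sum_{l:\pi\tilde D(l)=E_p}g_l=1$ fails as stated. The fix is cheap: first refine the given partition so that, for each time coordinate $(j,k)$, the continuity factors occurring among the atoms are pairwise disjoint or equal (a common refinement of finitely many sets, still inside the dense subclass); the partition sum $\sum_{l,j}|\mu(A_l\cap B_j)-\mu(A_l)\mu(B_j)|$ only increases under refinement, by the triangle inequality, so it suffices to bound refined partitions, and after refinement distinct coarse atoms are genuinely disjoint and your collapsing argument goes through. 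It is worth noting that the paper faces the identical issue and likewise disposes of it with the parenthetical assumption that the $\tilde D(j)$ may be taken pairwise disjoint or equal, so your proof, thus repaired, is no less rigorous than the original.
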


\begin{proof}
Let
$$ \beta(n):= E\sup \{ |\nu(B)|\sigma(\pi_t:\, t\le 0)\}-\nu(B)|: \ B\in \sigma(\pi_t:\, t\ge n\}.$$

Let $\mathcal F=\{D(1),...,D(L)\}$ be a finite $\sigma$-algebra in the past $\sigma$-algebra $\mathcal B_0$ of the $\mu$-process. Assume that
$D(l)=\bigcap_{j=-m}^0 T^j D_j(l)$, where the $D_j(l)$ are as in Lemmas \ref{lem:147.6} and \ref{lem:147.7}. It follows from the proof of Lemma \ref{lem:147.7}, that for a set $B$ as before
\begin{eqnarray*}
|\mu(B|D(j))-\mu(B)|&\le &  |\nu(\pi(B^*) 
|\pi\tilde D(j))- \nu(\pi(B^*))|\\
&=& |\mu(B^*|\tilde D(j)) -\mu(B^*)|. 
\end{eqnarray*}
(We may assume that the sets $\tilde D(j)$ are pairwise disjoint or equal.)
Hence
\begin{eqnarray*}
|\mu(B|\mathcal F)-\mu(B)|&\le& |\mu(B^*|\tilde D(j))-\mu(B^*)|\\
&=& |\mu(B^*|\tilde{\mathcal F})- \mu(B^*)|\qquad \mbox{on}\ \tilde D(j),
\end{eqnarray*}
where $\tilde {\mathcal F}=\{\tilde D(1),...,\tilde D(L)\}$.

Therefore
$$ |\mu(B|\mathcal F)-\mu(B)|\le \sup_{U\in \sigma(\pi_t: t\ge n)} |\mu(\pi^{-1}(U)|\tilde{\mathcal F})-\mu(\pi^{-1}(U))|$$
and
$$ \int \sup_B|\mu(B|\mathcal F)-\mu(B)| d\mu\le \int \sup_{B\in \sigma(\pi_t:t\ge n)}|\nu(B|\pi\tilde{\mathcal F})- \nu(B)| d\nu$$
where  we have used that $\nu(B|\pi\tilde{\mathcal F})\circ \pi= \mu(\pi^{-1}B|\tilde{\mathcal F})$.

If $\mathcal F\subset \mathcal F'$, then $E_\mu(\mu(B|\mathcal F')-\mu(B)|\mathcal F)= \mu(B|\mathcal F)-\mu(B)$  so that $\sup|\mu(B|\mathcal F)-\mu(B)|$ is a supermartingale, hence
$$ \int \sup_B|\mu(B)|\mathcal F')-\mu(B) d\mu\le \int\sup_{B\in \sigma(\pi_t:t\ge n)}|\nu(B|\pi\tilde {\mathcal F})-\nu(B)| d\nu$$
for all $\mathcal F'\supset \mathcal F$. By the submartingale convergence theorem
$$\int \sup_B|\mu(B|\mathcal B_0)-\mu(B)| d\mu \le \int \sup_B|\nu(B|\pi\tilde{\mathcal F})-\nu(B) d\nu$$
for every $\tilde{\mathcal F}$. Since $\pi\tilde{\mathcal F}$ runs through a generating family of finite $\sigma$-algebras of $\sigma(\pi: t\le 0)$, the past of the $\nu$-process, the same arguments apply to show that
$$ \int \sup_B|\mu(B|\mathcal B_0)-\mu(B)|\le \beta(n).$$
Note that equality in the last estimate follows as before.
\end{proof}

\begin{remark}\label{rem:147.1}
If $A$, $B\subset (]0,1[^d)^\mathbb Z$ are measurable, $A$ with respect to the coordinates $\le 0$ and $B$ with respect to the coordinates $\ge n$, note that $\omega\to \int \mathbbm 1_A d\mu_\omega$ is $\sigma(\pi_t: t\le 0)$ measurable and $\omega\to \int \mathbbm 1_B d\mu_\omega$ is $\sigma(\pi_t:t\ge n)$ measurable. If $\nu$ is strongly mixing it follows from the well-known basic inequality for this mixing condition that
\begin{eqnarray*}
&& |\mu(A\cap B)-\mu(A)\mu(B)|\\
&& \qquad = \left|\int \mu_\omega(A\cap B)d\nu(\omega) -\int \mu_\omega(A)d\nu(\omega) \int \mu_\omega(B) d\nu(\omega)\right|\\
&&\qquad = \left|\int \mu_\omega(A)\mu_\omega(B)d\nu(\omega) -\int \mu_\omega(A)d\nu(\omega) \int \mu_\omega(B) d\nu(\omega)\right|\\
&&\qquad \le 4\alpha(n).
\end{eqnarray*}
This is a slightly weaker statement than Lemma \ref{lem:147.6}.

 Similarly, there is a weaker statement (and easier proof) for Lemma \ref{lem:147.7} (consequently also for Lemma \ref{lem:147.8}). 
\end{remark}

\section{Appendix}\label{sec:147.4} The following Theorem\footnote{The appendix is  the conference abstract: Journ\'ees de Th\'eorie Ergodique, C.I.R.M.--Luminy, Marseille, 5--10 Juillet 1982}   has been proven by 
Berkes, Philipp (\cite{147.1}) and Philipp, Pinzur (\cite{147.3}):
\begin{theorem}
For a strongly mixing, strictly stationary sequence $(X_n)_{n\in \mathbb Z}$ of real valued random variables with rates $\alpha(n)=O(n^{-5+\epsilon})$ ($0<\epsilon<\frac 14$), and with continuous distribution function $F$, one can redefine the empirical process 
$$ R(s,t)= \sum_{i=1}^{[t]} \mathbbm 1_{]-\infty,s]}(X_i)-F(s)\quad s\in \mathbb R, t\ge 0$$
on a richer probability space on which there exists a Kifer process $K$ with covariance structure $\min(t,t') \Gamma(s,s')$ such that for some $\lambda>0$
$$ \sup_{0\le t\le T} \sup_{s\in\mathbb R} |R(s,t)-K(s,t)|=O(\sqrt{T} (\log T)^{-\lambda}\quad a.s.$$
\begin{eqnarray*}
\Gamma(s,s') = &&  E(\mathbbm 1_{\{X_1\le s\}}-F(s))(\mathbbm 1_{\{X_1\le s'\}}-F(s'))\\
&&\quad+\sum_{n\ge 2} E(\mathbbm 1_{\{X_1\le s\}}-F(s))(\mathbbm 1_{\{X_n\le s'\}}-F(s'))\\
&&\quad + \sum_{n\ge 2}E(\mathbbm 1_{\{X_1\le s'\}}-F(s'))(\mathbbm 1_{\{X_n\le s\}}-F(s)).
\end{eqnarray*}
\end{theorem}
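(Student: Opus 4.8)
The plan is to reduce to uniform one-dimensional marginals, and then to combine a blocking/coupling step that exploits the strong mixing with a strong Gaussian approximation for the resulting independent blocks. First I would use the continuity of $F$ to apply the probability integral transform: setting $U_i=F(X_i)$ gives uniformly distributed variables, the sequence $(U_i)$ inherits the rate $\alpha(n)$, and $\{X_i\le s\}=\{U_i\le F(s)\}$. Hence it suffices to prove the statement for the uniform empirical process $R(u,t)=\sum_{i\le[t]}(\mathbb{I}_{[0,u]}(U_i)-u)$, $u\in[0,1]$, and then to pull the constructed Kifer process back through $F$. This reduction is precisely where the main theorem of the present note enters once $F$ has jumps: it supplies the order-preserving, uniform-marginal factor that replaces the quantile transform.

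Next I would cut $\{1,\dots,[t]\}$ into consecutive big blocks of length $p$ separated by short gaps of length $q$, with $p,q\to\infty$ and $q/p\to0$ chosen as suitable powers of the block index. The sums of the centred indicators over the big blocks form a stationary sequence of almost independent block variables. Using a coupling lemma for strongly mixing sequences (the Berkes--Philipp approximation lemma, or Bradley's coupling), I would redefine these block sums on a richer space together with an independent sequence of the same one-block law, so that corresponding blocks agree off an event whose probability is controlled by $\alpha(q)$. The gap contributions and the coupling error are negligible against $\sqrt{T}\,(\log T)^{-\lambda}$ precisely because $\alpha(n)=O(n^{-5+\epsilon})$ is summable against the chosen block lengths.

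For the independent block sums I would discretise the spatial variable on a finite grid $0=u_0<\cdots<u_N=1$ and record, for each block, the vector of centred indicator sums at the grid points. Applying the multidimensional strong invariance principle of Berkes--Philipp for sums of independent random vectors, I would construct Gaussian vectors whose covariance matches the long-run covariance $\Gamma(u_i,u_j)$ of \eqref{eq1} (the defining series converges by the mixing hypothesis) and which approximate the partial sums with the Koml\'os--Major--Tusn\'ady error. Partial-summing the Gaussian vectors in the time variable and interpolating yields, on the grid, the finite-dimensional skeleton of a process with covariance $\min(t,t')\Gamma(u_i,u_j)$, i.e. of the desired Kifer process.

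It remains to pass from the grid to all $u\in[0,1]$ and to make the bound almost sure. Both $R$ and the approximating Gaussian process must be shown to oscillate by at most $O(\sqrt{T}\,(\log T)^{-\lambda})$ between adjacent grid points; for $R$ this rests on a Bernstein-type exponential bound for increments of sums of mixing indicators, and for the Gaussian process on a chaining estimate for its modulus of continuity. Choosing the grid size $N=N(T)$ to balance the discretisation, coupling and oscillation errors, and running a Borel--Cantelli argument along a sequence $t_k\uparrow\infty$, upgrades the estimate to the stated almost sure bound uniform in $0\le t\le T$. The main obstacle is exactly this final synthesis: simultaneously tuning the block length $p$, the gap $q$, the grid size $N$ and the coupling error so that every term is genuinely $O(\sqrt{T}\,(\log T)^{-\lambda})$, uniformly over the infinitely many directions indexed by $s$. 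Controlling this oscillation uniformly in $s$ and matching it against the KMT rate and the mixing decay is where the precise exponent $-5+\epsilon$ is consumed, and where essentially all the difficulty of the Berkes--Philipp--Pinzur argument resides.
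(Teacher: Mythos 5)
This theorem is not proved in the paper at all: it is quoted in the appendix (and as Theorem \ref{theo:147.1} in Section \ref{sec:147.2}) as a known result of Berkes--Philipp \cite{147.1} and Philipp--Pinzur \cite{147.3}, the note's own contribution being solely the removal of the continuity hypothesis via the substitution theorem, a role your opening paragraph correctly identifies. Your outline --- probability integral transform to uniform marginals, big-block/small-gap decomposition with coupling error controlled by $\alpha(q)$, a multidimensional strong approximation for the independent block vectors on a spatial grid matching the covariance $\min(t,t')\Gamma(s,s')$, and oscillation control plus Borel--Cantelli to obtain the uniform almost sure rate $O(\sqrt{T}(\log T)^{-\lambda})$ --- is essentially the strategy of those cited original proofs, so it takes the same approach as the source the paper relies on.
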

In order to drop the condition of continuity of $F$ one considers the following: Let $F^{-1}(s)=\inf\{t: F(t)\ge s\}$, $R_F= \mbox{\rm Im}\ F^{-1}$, and $\Pi:]0,1]^{\mathbb Z}\to \mathbb R^{\mathbb Z}$ be defined by 
$$ \Pi((y(n))_{n\in\mathbb Z})= (F^{-1}(y(n)))_{n\in \mathbb Z}.$$
\begin{theorem} Let $\nu$ be a shift invariant probability on $R_F^{\mathbb Z}$.Then there exists a shift invariant, ergodic probability $\mu$ on $]0,1[^{\mathbb Z}$ with the following properties:
\begin{enumerate}
\item $\mu\{ y(n)_{n\in \mathbb Z}:\ y(0)\le t\} =t$\quad($0<t<1$)
\item $\Pi\mu=\nu$
\item If $\nu$ is weakly mixing (mixing, strongly mixing with rate $\alpha(n)$) then $\mu$ has the same property.
\end{enumerate}
\end{theorem}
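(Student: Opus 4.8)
The plan is to specialize the fiberwise construction of Section \ref{sec:147.3} to the scalar case $d=1$ and to reuse, almost verbatim, the chain of Lemmas \ref{lem:147.1}--\ref{lem:147.8}, now with $\Pi$ in the role of $\pi$ and the single distribution function $F$ in place of the coordinate marginals. Concretely, let $a_1,a_2,\dots$ be the jump points of $F$ and $A_i=(F^{-1})^{-1}\{a_i\}\subset\,]0,1[$ the corresponding level intervals. For $s\in R_F$ I set $\mu(s)=\epsilon_{F^{-1}(s)}$ when $F$ is continuous at $s$, and $\mu(s)=\lambda(\,\cdot\mid A_i)$, the normalized Lebesgue measure on $A_i$, when $s=a_i$. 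Given $\omega=(\omega_n)_{n\in\Z}\in R_F^{\Z}$, I form the product fiber measure $\mu_\omega=\prod_{n\in\Z}\mu(\omega_n)$, supported on the $\Pi$-fiber over $\omega$, and define $\mu$ through $\int f\,d\mu=\int\!\int f\,d\mu_\omega\,d\nu(\omega)$; the measurability of $\omega\mapsto\int f\,d\mu_\omega$ needed to make this meaningful is the scalar case of Lemma \ref{lem:147.1}.

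The two identities driving everything are that $\mu_\omega$ is a product measure, so that for fixed $\omega$ the past and future coordinates are $\mu_\omega$-independent, and the equivariance $\int h\,d\mu_\omega=\int h\circ T\,d\mu_{T\omega}$. Property (b), $\Pi\mu=\nu$, then follows exactly as in Lemma \ref{lem:147.2}: on a cylinder $D=\bigcap_j T^jD_j$ whose coordinate sets are each contained in some $A_i$ or disjoint from $\bigcup_i A_i$ one gets $\mu(D)=\nu(\Pi D)$, and such cylinders generate, while stationarity of $\mu$ is immediate from stationarity of $\nu$ together with the equivariance relation. Property (a) is the scalar instance of the uniform-marginal computation of Lemma \ref{lem:147.2}: the continuity part contributes through $F\circ F^{-1}$ and the jump part through the identity $\lambda(A_i)=P(X_0=a_i)$, so that the two contributions assemble to $\mu\{y(0)\le t\}=t$.

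For property (c) I would follow Lemmas \ref{lem:147.4}--\ref{lem:147.6}. If $A$ is measurable with respect to the coordinates $\le 0$ and $B$ with respect to the coordinates $\ge n$, then the product structure gives $\mu_\omega(A\cap B)=\mu_\omega(A)\mu_\omega(B)$, whence
\[
\mu(A\cap B)-\mu(A)\mu(B)=\int\mu_\omega(A)\mu_\omega(B)\,d\nu-\int\mu_\omega(A)\,d\nu\int\mu_\omega(B)\,d\nu .
\]
Since $\omega\mapsto\mu_\omega(A)$ is $\sigma(\pi_t:t\le 0)$-measurable and $\omega\mapsto\mu_\omega(B)$ is $\sigma(\pi_t:t\ge n)$-measurable, this correlation is governed directly by the mixing of $\nu$ (crudely via Remark \ref{rem:147.1}, sharply via Lemma \ref{lem:147.6}). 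Cesàro averaging then transfers weak mixing and mixing from $\nu$ to $\mu$, while in the strongly mixing case one obtains $\alpha_\mu(n)\le\alpha_\nu(n)$ and, using that $\mu$ coincides with $\nu$ on the generating algebra $\Sigma(\gamma)$, equality, i.e.\ the rate is preserved. The ergodicity asserted in the statement is the Cesàro argument of Lemma \ref{lem:147.3}; I note that it can hold only under the (here necessary) assumption that $\nu$ itself is ergodic, since $\Pi\mu=\nu$ exhibits $\nu$ as a factor of $\mu$.

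I expect the main obstacle to be the exact rate preservation in the strongly mixing case, that is, the bookkeeping of Lemma \ref{lem:147.6}: after replacing each generating cylinder $D(l)$ by its $\Pi$-saturation $\tilde D(l)$ one must disjointify the $\tilde D(l)$ and factor out the fiber pieces $G(l)$, the key being that $\omega\mapsto\int\mathbbm 1_{G(l)}\,d\mu_\omega$ is constant on $\tilde D(l)$, so that these fiber contributions cancel cleanly and what remains is literally an $\alpha_\nu(n)$-estimate for the projected sets. Verifying that this reduction loses nothing, so that taking the supremum over $A$ and $B$ yields equality rather than mere domination, is the delicate point; if one only wants to preserve the mixing type with a possibly worse constant, the easier bounds of Remark \ref{rem:147.1} already suffice.
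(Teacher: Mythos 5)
Your proposal is correct and is essentially identical to the paper's own proof: the appendix theorem is precisely the main theorem of Section~\ref{sec:147.2} in the scalar case $d=1$, and the paper proves it by exactly the fiberwise canonical-measure construction and the chain of Lemmas~\ref{lem:147.1}--\ref{lem:147.8} that you reproduce, including the disjointification via the saturated cylinders $\tilde D(l)$ and the constancy of $\omega\mapsto\mu_\omega(G(l))$ on $\tilde D(l)$ for the exact rate in the strongly mixing case. Your observation that the ergodicity in the statement tacitly requires $\nu$ ergodic (since $\Pi\mu=\nu$ makes $\nu$ a factor of $\mu$) is also correct and consistent with the conditional formulation of the main theorem.
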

Added in 2018: The result has been cited several times and is here published as a complete manuscript:
\begin{enumerate}
\item  Bradley, Richard C.:
On possible mixing rates for some strong mixing conditions for N-tuplewise independent random fields. 
Houston J. Math. 38 (2012), no. 3, 815--832.
\item  Bradley, Richard C.:
Introduction to strong mixing conditions. Vol. 1--3. Kendrick Press, Heber City, UT, 2007.
\end{enumerate}

\end{document}